\newtheorem{theorem}{Theorem}[section]
\newtheorem{lemma}[theorem]{Lemma}
\newcounter{other}            
\newtheorem{otherth}[other]{Theorem}              
\newtheorem{otherl}[other]{Lemma}        
\newcommand{\Cn}{\mathbb{C}^n}
\newcommand{\Sn}{\mathbb{S}_ n}
\newcommand{\Bn}{\mathbb{B}_ n}
\newcommand{\D}{\Bbb D}
\numberwithin{equation}{section}
\begin{document}

\title[Area operators on Bergman spaces]{Boundedness of {area operators} on Bergman spaces}
\author[Xiaofen Lv]{Xiaofen Lv}
\address{Xiaofen Lv \\Department of Mathematics\\ Huzhou University, Huzhou
313000,   China } \email{lvxf@zjhu.edu.cn}

\author[Jordi Pau]{Jordi Pau}
\address{Jordi Pau \\Departament de Matem\`{a}tiques i Inform\`{a}tica \\
Universitat de Barcelona\\
08007 Barcelona\\
Catalonia, Spain} \email{jordi.pau@ub.edu}

\author[Maofa Wang]{Maofa Wang}
\address{Maofa Wang \\School of Mathematics and Statistics, Wuhan University, Wuhan
430072,   China} \email{mfwang.math@whu.edu.cn}



%
\subjclass[2010]{32A35, 32A36, 47B38}

\keywords{Area operator, Bergman space, tent space, area formula}

\thanks{The first author was partially
supported by  {NSFC (11601149,  11771139) and ZJNSF (LY20A010008)}.
The second author is
 supported by the grants MTM2017-83499-P (Ministerio de Educaci\'{o}n y Ciencia)  and  2017SGR358 (Generalitat de Catalunya).  M. Wang was partially supported by  {NSFC (11771340)}.}


\begin{abstract}
 We completely characterize the boundedness of the area operators from the Bergman spaces $A^p_\alpha(\Bn)$ to the Lebesgue spaces $L^q(\Sn)$ for all $0<p,q<\infty$. For the case $n=1$, some partial results were previously obtained by Wu in \cite{Wu11}. Especially, in the case $q<p$ and $q<s$,  we obtain the new characterizations for the  area operators to be bounded.  We solve the cases left open there and extend the results to $n$-complex dimension.
\end{abstract}

\maketitle


\section{Introduction}\allowdisplaybreaks[4]

Let $\Bn$ be the open unit ball in $\Cn$ and $\Sn$ be the unit sphere. For a parameter $\alpha>-1$ we consider the weighted volume measure
$$
dV_\alpha(z)=C(n,\alpha)(1-|z|^2)^\alpha\,dV(z),
$$
where $dV$ is the Lebesgue volume measure
on $\Bn$, and $C(n,\alpha)$ is a constant such that $V_\alpha(\Bn)=1$.  $H(\Bn)$ denotes the set of all holomorphic functions on $\Bn$.
For $0<p<\infty$, a function $f\in H(\Bn)$ belongs to the Bergman space $A^p_\alpha$, if
$$
\|f\|_{A^p_\alpha}=\left(\int_{\Bn}|f(z)|^pdV_\alpha(z)\right)^{\frac{1}{p}}<\infty.
$$
The theory of Bergman spaces and operators acting on them has been an intensive area of research for the las $30$ years, and we refer to the books \cite{ZZ, ZhuBn, ZOT} for more information on this. An important tool for studying operators on Bergman spaces are the Carleson-type measures, that are crucial for describing the boundedness of the embedding $I:A^p_{\alpha}\rightarrow L^p(\Bn,\mu)$. Closely related to that, is the area operator that we are going to define next. First,
for $\gamma>1$ and $\zeta \in \Sn$, define the Kor\'anyi (admissible, non-tangential) approach region $\Gamma_\gamma(\zeta)$ by
\begin{equation}\label{domain-1}
\Gamma_\gamma(\zeta)=\left\lbrace z \in \Bn : |1-\langle z,\zeta\rangle| <\frac{\gamma}{2}(1-|z|^2)\right\rbrace.
\end{equation}
In this paper we agree that $\Gamma(\zeta):=\Gamma_2(\zeta)$.
Let $\mu$ be a positive Borel measure on $\Bn$ and $s>0$. The area operator $A_{\mu,s}$ acting on $H(\Bn)$ is the sublinear operator defined by
$$
A_{\mu,s}(f)(\zeta)=\left(\int_{\Gamma(\zeta)}|f(z)|^s\frac{d\mu(z)}{(1-|z|)^n}\right)^{1/s}.
$$

Area operators are very useful in harmonic analysis. They are related to, for example, the non-tangential maximal functions, Littlewood-Paley operators, multipliers,  Poisson integrals and tent spaces, etc.
On the Hardy spaces $H^p$, for $n=1$ and $s=1$, the area operators have been studied by Cohn \cite{Co97}. The main result in \cite{Co97} states that, for $0<p<\infty$, $A_{\mu,1}: H^p\to L^p(\partial \D)$ is bounded if and only if $\mu$ is a Carleson measure. This result was generalized from $H^p$ to $L^q(\partial \D)$ for $0<p \leq q<\infty$
and $1\leq q<p\leq \infty$ in \cite{GLW}. In the unit disc, Wu \cite{Wu06} discussed the boundedness of $A_{\mu,1}$ from Bergman spaces $A^p_\alpha$ to $L^q(\partial \D)$ for $0<p \leq q<\infty$
and $1\leq q<p\leq \infty$. Wu also mentioned the characterizations on $\mu$ for $A_{\mu,s}$ to be bounded from $A^p_\alpha$ to $L^q(\partial \D)$ for $0<s\leq q<p\leq\infty$ in \cite{Wu11}, but the case $q<s$ was left open. It is our goal to solve this last case, and to extend the results to the setting of the unit ball.

Let $\beta(z,w)$ denote the Bergman metric on $\Bn$, and $D(a,r)=\{z\in \Bn : \beta(a,z)<r\}$ be the Bergman metric ball of radius $r>0$ centered at a point $a\in \Bn$. Given a positive Borel measure $\mu$ on $\Bn$ and $\alpha>-1$, for $z\in \Bn$ and $r>0$, we define
$$\rho_{\alpha,\mu}(z, r)=\frac{\mu(D(z,r))}{V_\alpha(D(z,r))}.$$
The following is our main result which completely characterizes the boundedness of the area operators from the Bergman spaces $A^p_\alpha$ to the Lebesgue spaces $L^q(\Sn)$ by $\rho_{\alpha,\mu}(z, r)$ for all $0<p,q<\infty$. The notion of $\beta$-Carleson measures are defined in section \ref{pre}.

\begin{theorem}\label{th-1}
Let $\alpha>-1$, $0<r, s, p, q<\infty$, and let $\mu$ be a positive Borel measure on $\Bn$. Then the following statements hold.
\begin{enumerate}
\item[(a)] If $p\leq \min\{s,q\}$, then $A_{\mu,s}: A_\alpha^p\rightarrow L^q (\Sn)$ is bounded if and only if
$$
\frac{\rho_{\alpha,\mu}(z,r)}
{(1-|z|)^{\frac{(n+1+\alpha)(s-p)}{p}
+\frac{n(q-s)}{q}}}
$$
is bounded in $\Bn$.
\vspace{1mm}
	
\item[(b)]If $s<p\leq q$,  then  $A_{\mu,s}: A_\alpha^p\rightarrow L^q (\Sn)$ is bounded if and only if
$$
\rho_{\alpha,\mu}(z,r)^{\frac{p}{p-s}}dV_\alpha(z)
$$
 is a $\frac{p(q-s)}{q(p-s)}$-Carleson measure.
 \vspace{1mm}	
 	
\item[(c)]  If $p> \max\{s,q\}$,   then $A_{\mu,s}: A_\alpha^p\rightarrow L^q (\Sn)$ is bounded if and only if
$$
 \int_{\Gamma(\zeta)}\rho_{\alpha,\mu}(z,r)^{\frac{p}{p-s}}\frac{dV_\alpha(z)}{(1-|z|)^n}
$$
belongs to $L^{\frac{q(p-s)}{s(p-q)}}(\Sn)$.
\vspace{1mm}

 \item[(d)] If $q<p\leq s$,   then $A_{\mu,s}: A_\alpha^p\rightarrow L^q (\Sn)$ is bounded if and only if
 $$
 \sup\limits_{z\in \Gamma(\zeta)}\rho_{\alpha,\mu}(z,r)(1-|z|)^{\frac{(\alpha+1)(s-p)}{p}}
 $$
belongs to $L^{\frac{pq}{s(p-q)}}(\Sn).$
\end{enumerate}
\end{theorem}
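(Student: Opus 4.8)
The plan is to convert the operator inequality $\|A_{\mu,s}f\|_{L^q(\Sn)}\lesssim\|f\|_{A^p_\alpha}$ into a single inequality between discrete tent (sequence) spaces, and then to analyze that inequality separately in each of the four regimes determined by the position of $p$ relative to $s$ and $q$.

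\smallskip
\emph{Preliminary reductions and discretization.} I would first fix a separated lattice $\{a_k\}\subset\Bn$ for which the Bergman balls $\{D(a_k,r)\}$ cover $\Bn$ with bounded overlap, and recall the standard facts $V_\alpha(D(a_k,r))\asymp(1-|a_k|^2)^{n+1+\alpha}$, the sub-mean value estimate $|f(a_k)|^p\lesssim V_\alpha(D(a_k,r))^{-1}\int_{D(a_k,r)}|f|^p\,dV_\alpha$, and the fact that the shadow $I_k=\{\zeta\in\Sn: a_k\in\Gamma_\gamma(\zeta)\}$ satisfies $\sigma(I_k)\asymp(1-|a_k|)^n$. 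Using these together with the definition of $\rho_{\alpha,\mu}$ (so that $\mu(D(a_k,r))\asymp\rho_{\alpha,\mu}(a_k,r)(1-|a_k|)^{n+1+\alpha}$) and Fubini, the area operator is pointwise comparable, up to an admissible change of aperture $\gamma$, to the discrete expression
\[
A_{\mu,s}f(\zeta)^s\asymp\sum_{k:\,a_k\in\Gamma_\gamma(\zeta)}|f(a_k)|^s\,\rho_{\alpha,\mu}(a_k,r)\,(1-|a_k|)^{1+\alpha}.
\]
Writing $\lambda_k=|f(a_k)|(1-|a_k|)^{(n+1+\alpha)/p}$ and $\omega_k=\rho_{\alpha,\mu}(a_k,r)(1-|a_k|)^{1+\alpha-(n+1+\alpha)s/p}$, the estimate $\sum_k\lambda_k^p\lesssim\|f\|_{A^p_\alpha}^p$ (sub-mean value plus bounded overlap), and in the converse direction the atomic decomposition of $A^p_\alpha$, reduce the whole theorem to characterizing when
\[
\int_{\Sn}\Big(\sum_{k:\,a_k\in\Gamma_\gamma(\zeta)}\lambda_k^s\,\omega_k\Big)^{q/s}\,d\sigma(\zeta)\lesssim\Big(\sum_k\lambda_k^p\Big)^{q/p}
\]
holds for every nonnegative sequence $\{\lambda_k\}\in\ell^p$.

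\smallskip
\emph{The four regimes.} The position of $p$ relative to $s$ and $q$ dictates the tool. When $p\le\min\{s,q\}$ one has $\ell^p\hookrightarrow\ell^s$ and the extremal configuration is a single atom; a direct computation using $\sigma(I_k)\asymp(1-|a_k|)^n$ then shows the inequality is equivalent to the uniform bound $\sup_k\omega_k(1-|a_k|)^{ns/q}<\infty$, which is exactly condition (a). When $s<p$ one applies H\"older in the inner sum with exponents $p/s$ and $p/(p-s)$, replacing $|f(a_k)|^s$ by $\lambda_k^p$ at the cost of the weight $\omega_k^{p/(p-s)}\asymp\rho_{\alpha,\mu}(a_k,r)^{p/(p-s)}(\ldots)$, after which the question is a Carleson-type embedding for $\rho_{\alpha,\mu}^{p/(p-s)}\,dV_\alpha$. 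If in addition $q\ge p$ (case (b)), this is a genuine Carleson embedding for tent spaces and yields the $\frac{p(q-s)}{q(p-s)}$-Carleson condition; if instead $q<p$ (case (c)), the outer integrability $q<p$ forces, via duality in $L^{q/s}$ together with the constraint $\|\lambda\|_{\ell^p}\le1$, the $L^{q(p-s)/(s(p-q))}(\Sn)$ condition on the tent integral of $\rho_{\alpha,\mu}^{p/(p-s)}\,dV_\alpha/(1-|z|)^n$. Finally, when $p\le s$ and $q<p$ (case (d)), the factor $s\ge p$ lets one dominate the inner sum by a non-tangential maximal function of the weights, and combining this with the $q<p$ duality produces the $L^{pq/(s(p-q))}(\Sn)$ condition on $\sup_{z\in\Gamma(\zeta)}\rho_{\alpha,\mu}(z,r)(1-|z|)^{(\alpha+1)(s-p)/p}$.

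\smallskip
\emph{Necessity and the main obstacle.} For the necessity of each condition I would test on normalized kernels $f_a(z)=(1-|a|^2)^{b-(n+1+\alpha)/p}(1-\langle z,a\rangle)^{-b}$, which satisfy $\|f_a\|_{A^p_\alpha}\asymp1$ and $|f_a|\asymp(1-|a|)^{-(n+1+\alpha)/p}$ on $D(a,r)$; a single kernel suffices in cases (a) and (b). The cases (c) and (d), with $q<p$, require superpositions $f=\sum_k\lambda_k\varepsilon_k f_{a_k}$ and an averaging over the signs $\varepsilon_k$ through Khinchine's inequality to recover the sharp $L^t(\Sn)$ integrability. I expect the main obstacle to be case (d) together with the $q<p$ part of (c), precisely the range $q<s$ left open by Wu: here the maximal-function behaviour coming from $s\ge p$, the failure of the triangle inequality in $L^{q/s}$ with $q/s<1$, and the spatial overlap of the cones must be handled at once. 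The delicate point is that the crude bound $\sum_{a_k\in\Gamma(\zeta)}\lambda_k^s\omega_k\lesssim\sup_{a_k\in\Gamma(\zeta)}\omega_k$ is too lossy, so one must track how the $\ell^p$ mass is distributed across the shadows $I_k$; it is this sharp bookkeeping, rather than any single inequality, that pins down both the weight $(1-|z|)^{(\alpha+1)(s-p)/p}$ and the exponent $pq/(s(p-q))$.
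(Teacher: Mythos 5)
Your overall architecture (discretize over a lattice, test on normalized kernels and on Rademacher superpositions via Khinchine--Kahane, use cone-wise H\"older for sufficiency) does match the paper's skeleton, but three steps are either wrong as stated or omit the decisive idea. First, the two-sided discretization $A_{\mu,s}f(\zeta)^s\asymp\sum_{a_k\in\Gamma_\gamma(\zeta)}|f(a_k)|^s\rho_{\alpha,\mu}(a_k,r)(1-|a_k|)^{1+\alpha}$ is false for general $f$: the lower bound fails because $\mu$ restricted to $D(a_k,r)$ may be concentrated where $|f|$ is much smaller than $|f(a_k)|$ (indeed $f$ may vanish on the support of $\mu|_{D_k}$), so $\int_{D_k}|f|^s\,d\mu$ does not dominate $|f(a_k)|^s\mu(D_k)$. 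Only the $\lesssim$ direction survives (with $\sup_{D_k}|f|$), and the paper obtains lower bounds only for test functions satisfying $|f_k|\asymp(1-|a_k|)^{-(n+1+\alpha)/p}$ on all of $D_k$. Second, your route to case (b) cannot give the sharp sufficiency: H\"older on the cone yields $A_{\mu,s}f(\zeta)^s\lesssim\bigl(\int_{\Gamma(\zeta)}|f|^p\frac{dV_\alpha}{(1-|z|)^n}\bigr)^{s/p}G_\mu(\zeta)^{(p-s)/p}$ with $G_\mu=\widetilde{\nu}$, and already for $p=q$ this forces the hypothesis $\widetilde{\nu}\in L^\infty(\Sn)$, which is strictly stronger than $\nu=\rho_{\alpha,\mu}^{p/(p-s)}dV_\alpha$ being a Carleson measure (a Carleson measure can have unbounded balayage: e.g.\ $\nu=\sum_j 2^{-jn}\delta_{(1-2^{-j})e_1}$); for $q>p$ the second H\"older step is unavailable since $p/q<1$. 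The paper instead dualizes $\|(A_{\mu,s}f)^s\|_{L^{q/s}(\Sn)}$ against the unit ball of $H^{q/(q-s)}$ via Luecking's theorem, reducing to the statement that $|f|^s d\mu$ is a $\frac{q-s}{q}$-Carleson measure uniformly over $\|f\|_{A^p_\alpha}\le 1$, which is then converted into the Carleson condition on $\nu$ by a separate lemma based on \cite[Theorem 54]{ZZ}.

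Third, and most importantly, the necessity in cases (c) and (d) --- precisely the range $q<\min\{p,s\}$ that was open --- is not actually proved in your sketch. After Khinchine you arrive at
$\int_{\Sn}\bigl(\sum_{a_j\in\Gamma(\zeta)}|\lambda_j|^s\rho_{\alpha,\mu}(a_j,r)(1-|a_j|)^{(1+\alpha)(p-s)/p}\bigr)^{q/s}d\sigma(\zeta)\lesssim\|\lambda\|_{T^p_p}^q$ for all $\lambda$, and the whole difficulty is to deduce from this quantified-over-$\lambda$ statement that the weight sequence lies in $T^{p/(p-q)}_{ps/((p-s)q)}(Z)$ (resp.\ $T^{p/(p-q)}_\infty(Z)$ in case (d)). A naive ``duality in $L^{q/s}$'' does not work since $q/s$ may be less than $1$ and the extremizing $\lambda$ would have to depend on $\zeta$. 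The mechanism the paper uses is to raise the target sequence to a small power $1/t$ so as to land in a reflexive tent space, apply the duality $T^p_q(Z)^*\cong T^{p'}_{q'}(Z)$ (or $T^{p'}_\infty(Z)$ when $q\le 1$), and then factor an arbitrary element of the dual as a product of a sequence in $T^{t'}_{st/(st-q)}(Z)$ and a power of a sequence in $T^p_p(Z)$, using the factorization $T^p_q(Z)=T^{p_1}_{q_1}(Z)\cdot T^{p_2}_{q_2}(Z)$ from \cite{MPPW}; the $T^p_p$ factor is exactly the $\lambda$ one feeds into the Khinchine inequality. Your closing remark that ``it is this sharp bookkeeping \dots\ that pins down the weight and the exponent'' identifies where the problem lies but supplies no mechanism for it; without the tent-space factorization (or an equivalent substitute) the proof of (c) and (d) is incomplete.
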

\medskip
The most important part of this result is the case $q<p$, that was left open even for the unit disk in. The proof of this case given here, uses several tools such as Khintchine-Kahane's inequalities, and factorization of tent sequence spaces. Moreover, the proof we give of the case $p\le q$ is, in our opinion, more simpler and natural than the known proof for the case of the unit disk.\\

The exposition of the paper is organized as follows.
In Section \ref{pre} we recall some basic facts which will be used in the sequel. Section \ref{Main} is devoted to the proof of Theorem \ref{th-1}.\medskip

\textit{Conventions.}
Throughout the paper, constants are
often given without computing their exact values, and the value of a constant $C$ may change
from one occurrence to the next. We also use the notation
$a\lesssim b$  or $b\gtrsim a$ for nonnegative quantities $a$ and
$a$ to indicate that $a\le C b$ for some inessential constant $C>0$.
Similarly, we use the notation $a\asymp b$ if both $a\lesssim b$
and $b \lesssim a$ hold. Given $p \in [1,\infty]$, we will denote by $p'=p/(p-1)$ its H\"older conjugate. In this context we agree that $1'=\infty$ and $\infty'=1$.\bigskip

\section{Preliminaries}\label{pre}

In this section, we collect the necessary preliminaries for the course of the proof of our main theorem.

\subsection{Carleson measures and embedding theorems}

Let us recall the concept of a Carleson measure. For $\zeta \in \Sn$ and $\delta>0$, consider the non-isotropic metric ball:
\[
B_{\delta}(\zeta)=\left\lbrace z\in \Bn: |1-\langle z, \zeta \rangle |<\delta \right\rbrace.
\]
A positive Borel measure $\mu$ on $\Bn$ is said to be a Carleson measure if
\[
\mu(B_{\delta}(\zeta))\lesssim \delta^ n
\]
for all $\zeta \in \Sn$ and $\delta>0$. It is clear that every Carleson measure is finite. In \cite{H} H\"{o}rmander  generalized to the setting of several complex variables the famous  Carleson measure embedding theorem \cite{Car0, Car1} asserting that, for $0<p<\infty$, the embedding $I_ d:H^p\rightarrow L^p(\mu):=L^p(\Bn,d\mu)$ is bounded if and only if $\mu$ is a Carleson measure.
More generally, for $s>0$, a finite positive Borel measure $\mu$ on $\Bn$ is  an $s$-Carleson measure if  $\mu(B_{\delta}(\zeta))\lesssim \delta^{ns}$ for all $\zeta\in \Sn$ and $\delta>0$. We set $$\|\mu\|_{CM_s}:=\sup_{\zeta \in \Sn, \delta>0} \mu(B_{\delta}(\zeta))\delta^{-ns}.$$
For simplicity, we also write $\|\mu\|_{CM}$ for  $\|\mu\|_{CM_1}.$
It is well-known (see \cite[Theorem 45]{ZZ}) that $\mu$ is an $s$-Carleson measure if and only if, for each (some) $t>0$,
\begin{equation}\label{sCM}
\sup_{a\in \Bn}\int_{\Bn} \!\!\frac{(1-|a|^2)^t}{|1-\langle a,z \rangle |^{ns+t}} \,d\mu(z)<\infty.
\end{equation}
Moreover, with constants depending on $t$, the supremum of the above integral is comparable to $\|\mu\|_{CM_s}$.
In \cite{Du}, Duren gave an extension of Carleson's theorem by showing that, for
 $0<p\leq q<\infty$, one has that $I_ d:H^p\rightarrow L^q(\mu)$ is bounded if and only if $\mu$ is a $q/p$-Carleson measure. Moreover, one has the estimate
 \begin{equation}\label{sCM-1}
 \|I_ d\|_{H^p\rightarrow L^q(\mu)}\asymp \|\mu\|^{1/q}_{CM_{q/p}}.
 \end{equation}
 A  proof of this result, in the setting of the unit ball, can be found in \cite{P1} for example.

Given a positive Borel measure $\mu$ on $\Bn$, then for $\zeta \in \Sn$, we define
$$\widetilde{\mu}(\zeta)=\int_{\Gamma(\zeta)}\frac{d\mu(z)}{(1-|z|^2)^n}.$$

\noindent The following result is known as Luecking's theorem, and is originally from \cite{Lue1}. The present variant can be found in \cite{P1}, for instance.

\begin{otherth}\label{LT}
Let $0<s<p<\infty$ and let $\mu$ be a positive Borel measure on $\Bn$. Then the identity $I_ d:H^p\rightarrow L^s(\mu)$ is bounded, if and only if, the function $\widetilde{\mu}$
belongs to $L^{p/(p-s)}(\Sn)$. Moreover,  $\|I_d\|_{H^p\rightarrow L^s(\mu)}\asymp \|\widetilde{\mu}\|_{L^{p/(p-s)}(\Sn)}^{1/s}.$
\end{otherth}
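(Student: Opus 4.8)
The plan is to prove the two implications separately, with sufficiency giving $\|I_d\|_{H^p\to L^s(\mu)}\lesssim\|\widetilde\mu\|_{L^{p/(p-s)}(\Sn)}^{1/s}$ and necessity the reverse estimate, so that together they yield the asserted norm equivalence. Throughout I write $u=p/(p-s)$, so that its conjugate exponent is $u'=p/s$, and I let $N(f)(\zeta)=\sup_{z\in\Gamma(\zeta)}|f(z)|$ denote the admissible maximal function, for which the classical equivalence $\|N(f)\|_{L^p(\Sn)}\asymp\|f\|_{H^p}$ is available.

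For sufficiency, the starting point is the elementary Fubini identity coming from the fact that $\sigma(\{\zeta\in\Sn:z\in\Gamma(\zeta)\})\asymp(1-|z|^2)^n$: for every $f\in H(\Bn)$,
\[
\int_{\Bn}|f(z)|^s\,d\mu(z)\asymp\int_{\Sn}\Big(\int_{\Gamma(\zeta)}|f(z)|^s\frac{d\mu(z)}{(1-|z|^2)^n}\Big)\,d\sigma(\zeta).
\]
Since $|f(z)|\le N(f)(\zeta)$ whenever $z\in\Gamma(\zeta)$, the inner integral is at most $N(f)(\zeta)^s\,\widetilde\mu(\zeta)$; applying H\"older's inequality with the exponents $u'=p/s$ and $u=p/(p-s)$ and using $\|N(f)\|_{L^p(\Sn)}\asymp\|f\|_{H^p}$ then gives
\[
\int_{\Bn}|f|^s\,d\mu\lesssim\int_{\Sn}N(f)^s\,\widetilde\mu\,d\sigma\le\|N(f)\|_{L^p(\Sn)}^s\,\|\widetilde\mu\|_{L^u(\Sn)}\asymp\|f\|_{H^p}^s\,\|\widetilde\mu\|_{L^u(\Sn)},
\]
which is the desired boundedness. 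This half is essentially mechanical; the substance of the theorem is the converse.

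For necessity I would first discretize. Fix a lattice $\{a_k\}$ separated in the Bergman metric whose balls $D(a_k,R)$ cover $\Bn$ with bounded overlap, and set $\beta_k=\mu(D(a_k,R))/(1-|a_k|^2)^n$. By a change-of-aperture argument for the cones $\Gamma(\zeta)$, the condition $\widetilde\mu\in L^u(\Sn)$ is equivalent to $\int_{\Sn}\big(\sum_{a_k\in\Gamma(\zeta)}\beta_k\big)^u\,d\sigma<\infty$, i.e.\ to membership of $\{\beta_k\}$ in a discrete tent sequence space. To exploit the hypothesis I would test the embedding on the randomized sums $F_\omega(z)=\sum_k r_k(\omega)\,c_k\,(1-|a_k|^2)^c(1-\langle z,a_k\rangle)^{-(c+n/p)}$ built from normalized reproducing kernels ($r_k$ the Rademacher functions, $c$ large), apply the embedding inequality to each $F_\omega$, integrate in $\omega$, and linearize both sides with Khintchine's inequality. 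On the $L^s(\mu)$ side, restricting the resulting square sum to the ball containing $z$ and using bounded overlap produces the lower bound $\sum_k|c_k|^s(1-|a_k|^2)^{-ns/p}\mu(D(a_k,R))$; on the $H^p$ side, Khintchine together with the standard kernel estimate \eqref{sCM} produces an upper bound by the norm of $\{c_k\}$ in a companion tent sequence space $T^p_2$.

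Combining these with the embedding inequality yields a discrete inequality asserting that the pairing of $\{\beta_k\}$ against arbitrary test coefficients is dominated by the $T^p_2$-norm of those coefficients. The conclusion then follows from the duality and factorization of tent sequence spaces, the latter being exactly what is needed to handle the nonlinear passage from $\{c_k\}$ to $\{|c_k|^s(1-|a_k|^2)^{-ns/p}\}$: choosing the test coefficients dual to $\{\beta_k\}$ forces $\{\beta_k\}$ into the predual space, i.e.\ $\int_{\Sn}\big(\sum_{a_k\in\Gamma(\zeta)}\beta_k\big)^u\,d\sigma<\infty$, which is $\widetilde\mu\in L^u(\Sn)$, with the quantitative bound $\|\widetilde\mu\|_{L^u(\Sn)}^{1/s}\lesssim\|I_d\|_{H^p\to L^s(\mu)}$. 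The main obstacle is this necessity half: carrying out the two Khintchine estimates with the correct tent-space exponents, verifying the upper bound $\int_0^1\|F_\omega\|_{H^p}^p\,d\omega\lesssim\|\{c_k\}\|_{T^p_2}^p$, and then matching the tent-sequence duality so that the predual exponent comes out exactly equal to $u=p/(p-s)$. The exponent bookkeeping is the delicate point, whereas the sufficiency and the geometric Fubini identity are routine.
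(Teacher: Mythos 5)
This statement is Theorem~A in the paper, which is \emph{not} proved there: it is quoted as Luecking's theorem from \cite{Lue1}, with the unit-ball variant attributed to \cite{P1}. So there is no in-paper proof to compare against; the closest internal analogues are the proofs of Theorems~\ref{th-03} and \ref{th-04}, which run the same Khinchine/Kahane--plus--tent-space machinery in the Bergman setting. Your sufficiency half is complete and correct: \eqref{EqG}, the pointwise bound by the admissible maximal function, H\"older with exponents $p/s$ and $p/(p-s)$, and the maximal theorem give exactly $\|I_d\|\lesssim\|\widetilde\mu\|_{L^{p/(p-s)}}^{1/s}$.

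The necessity half is the right strategy but has two genuine gaps as written. First, the estimate $\|F_\omega\|_{H^p}\lesssim\|\{c_k\}\|_{T^p_2(Z)}$ is itself a nontrivial theorem (the synthesis half of the atomic decomposition of $H^p$ via tent sequences); it does not follow from ``Khinchine together with \eqref{sCM}'', and you cannot pass $\int_0^1(\cdot)\,d\omega$ inside the $\sup_r$ defining the $H^p$-norm to linearize it. You should instead invoke the deterministic bound for each fixed $\omega$ (it is in \cite{P1}, \cite{Jev}), noting $\|\{r_k(\omega)c_k\}\|_{T^p_2}=\|\{c_k\}\|_{T^p_2}$. Second, and more seriously, the closing duality step does not work as described. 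After Khinchine you obtain
$$\sum_k |c_k|^s(1-|a_k|^2)^{-ns/p}\,\mu(D(a_k,R))\lesssim \|I_d\|^s\,\|\{c_k\}\|^s_{T^p_2(Z)},$$
and dualizing this ``as is'' identifies the weighted sequence $\{\beta_k(1-|a_k|^2)^{-ns/p}\}$ as an element of $T^{p/(p-s)}_{(2/s)'}(Z)$ (or $T^{p/(p-s)}_\infty$ when $s\ge2$), not $\{\beta_k\}$ as an element of $T^{p/(p-s)}_{1}(Z)$; the factorization Theorem~\ref{TTD2} does not repair this because the coefficient space here is $T^p_2$, not $T^p_p$, and the weight $(1-|a_k|^2)^{-ns/p}$ falls outside its scope. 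The step can be closed, but by a different mechanism: by the duality $(T^{v}_1)^*=T^{v'}_\infty$ with $v=p/(p-s)$, $v'=p/s$ (Theorem~\ref{TTD1}, case $q\le1$), it suffices to bound $\sum_k\beta_k\theta_k(1-|a_k|^2)^n$ for $\theta\in T^{p/s}_\infty(Z)$, and the choice $c_k=\theta_k^{1/s}(1-|a_k|^2)^{n/p}$ feeds into the displayed inequality because $\sum_{a_k\in\Gamma(\zeta)}(1-|a_k|^2)^{2n/p}\lesssim1$ yields $\|c\|_{T^p_2}^s\lesssim\|\theta\|_{T^{p/s}_\infty}$. (Equivalently, Luecking's original route: dualize in $L^{p/s}(\Sn)$ and control $\|c\|_{T^p_2}$ by the Hardy--Littlewood maximal function of the dual test function.) Until one of these is put in place of the asserted ``duality and factorization'', the necessity direction is not established.
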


\subsection{Separated sequences and lattices}

A sequence $Z=\{a_k\}\subset \Bn$ is said to be separated (in the Bergman metric) if there exists $\delta>0$ such that $\beta(a_ i,a_ j)\geq \delta$ for all $i$ and $j$ with $i\neq j$. This implies that there is $r>0$ such that the Bergman metric balls $D(a_k,r)=\{z\in \Bn :\beta(z,a_k)<r\}$ are pairwise disjoint.

We are going to use a well-known
result on decomposition of the unit ball $\Bn$.
By Theorem 2.23 in \cite{ZhuBn},
there exists a positive integer $N$ such that for any $0<r\le1$ we can
find a sequence $\{a_k\}$ in $\Bn$ with the  properties:
\begin{itemize}
\item[(i)  ] $\Bn=\bigcup_{k}D(a_k,r)$;
\item[(ii) ] The sets $D(a_k,r/4)$ are mutually disjoint;
\item[(iii) ] Each point $z\in\Bn$ belongs to at most $N$ of the sets $D(a_k,4r)$.
\end{itemize}

\noindent Any sequence $\{a_k\}$ satisfying the above conditions is called
an $r$-\emph{lattice}
(in the Bergman metric). Obviously any $r$-lattice is a separated sequence.  We often write $D_k=D(a_k, r)$ and $\widetilde{D}_k=D(a_k, 2r)$ for simplicity.

We need the following important result, essentially due to Coifman and Rochberg \cite{CR}, that can be found in Theorem 2.30 of \cite{ZhuBn}. We only need one part of the cited theorem, and it is easy to see that this part holds for all separated sequences.

\begin{otherth}\label{atomic}
Let $0<p<\infty$, $\alpha>-1$, and
$s>n\max\{1,1/p\}-n/p.$
For any separated sequence $\{a_k\}$ and $\lambda=\{\lambda_k\} \in \ell^p$, the function
$$f(z)=\sum_{k}\lambda_k \frac{(1-|a_k|^2)^s}{(1-\langle z,a_k\rangle)^{s+\frac{n+1+\alpha}{p}}}$$
belongs to $A^p_\alpha$, where the series converges in the quasinorm topology of $A^p_\alpha$. Moreover,
$\|f\|_{A^p_\alpha}\lesssim \|\lambda\|_{\ell^p}.$
\end{otherth}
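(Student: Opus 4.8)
The plan is to establish the quasinorm bound $\|f\|_{A^p_\alpha}\lesssim\|\lambda\|_{\ell^p}$ for finitely supported $\lambda$; the convergence of the full series in the quasinorm topology then follows by applying this bound to the tails $\sum_{M<k\le M'}\lambda_k K_k$, which shows the partial sums are Cauchy in $A^p_\alpha$, and the limit is holomorphic because $A^p_\alpha$-convergence forces uniform convergence on compact subsets of $\Bn$. Writing $K_k(z)=(1-|a_k|^2)^s(1-\langle z,a_k\rangle)^{-t}$ with $t=s+\tfrac{n+1+\alpha}{p}$, I would split into the regimes $0<p\le1$ and $p>1$, the first being elementary and the second carrying the real difficulty.

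For $0<p\le1$ I would use the elementary inequality $(\sum_k x_k)^p\le\sum_k x_k^p$ for $x_k\ge0$. Applied to $|f(z)|\le\sum_k|\lambda_k|\,|K_k(z)|$ it gives $|f(z)|^p\le\sum_k|\lambda_k|^p(1-|a_k|^2)^{sp}|1-\langle z,a_k\rangle|^{-(n+1+\alpha+sp)}$. Integrating term by term against $dV_\alpha$ and invoking the standard integral estimate (see \cite{ZhuBn})
\[
\int_{\Bn}\frac{dV_\alpha(z)}{|1-\langle z,a\rangle|^{n+1+\alpha+c}}\asymp(1-|a|^2)^{-c},\qquad c>0,
\]
with $c=sp>0$, each integral collapses to $(1-|a_k|^2)^{-sp}$, cancelling the prefactor and leaving $\|f\|_{A^p_\alpha}^p\lesssim\sum_k|\lambda_k|^p$. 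Only $s>0$ is used, which is exactly the hypothesis $s>n\max\{1,1/p\}-n/p=0$ in this range; note that separation plays no role here.

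For $p>1$ the triangle inequality is too lossy to reach the sharp exponent $s>n/p'$, so I would route the argument through the maximal weighted Bergman projection $P^+_\beta h(z)=c_\beta\int_{\Bn}(1-|w|^2)^\beta|1-\langle z,w\rangle|^{-(n+1+\beta)}h(w)\,dV(w)$, with $\beta=t-(n+1)=s+\tfrac{n+1+\alpha}{p}-(n+1)$. Fix pairwise disjoint Bergman balls $D(a_k,\rho)$ (available from separation). Using the comparabilities $1-|w|^2\asymp1-|a_k|^2$ and $|1-\langle z,w\rangle|\asymp|1-\langle z,a_k\rangle|$ for $w\in D(a_k,\rho)$ uniformly in $z$, together with $V(D(a_k,\rho))\asymp(1-|a_k|^2)^{n+1}$ and $n+1+\beta=t$, I would dominate each atom by an average,
\[
|K_k(z)|\asymp(1-|a_k|^2)^{-\frac{n+1+\alpha}{p}}\int_{D(a_k,\rho)}\frac{(1-|w|^2)^\beta}{|1-\langle z,w\rangle|^{n+1+\beta}}\,dV(w).
\]
Setting $h=\sum_k|\lambda_k|(1-|a_k|^2)^{-\frac{n+1+\alpha}{p}}\chi_{D(a_k,\rho)}$ then yields the pointwise bound $|f(z)|\lesssim P^+_\beta h(z)$. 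The boundedness of $P^+_\beta$ on $L^p(dV_\alpha)$ gives $\|f\|_{A^p_\alpha}\lesssim\|h\|_{L^p(dV_\alpha)}$, while the disjointness of the balls and $V_\alpha(D(a_k,\rho))\asymp(1-|a_k|^2)^{n+1+\alpha}$ give $\|h\|_{L^p(dV_\alpha)}^p\asymp\sum_k|\lambda_k|^p$, completing the estimate.

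The main obstacle is precisely the availability and sharpness of the projection step. The positive operator $P^+_\beta$ is bounded on $L^p(dV_\alpha)$ as soon as $p(\beta+1)>\alpha+1$; substituting $\beta=s+\tfrac{n+1+\alpha}{p}-(n+1)$ and simplifying turns this into $s>n(1-1/p)=n/p'$, matching the hypothesis $s>n\max\{1,1/p\}-n/p$ in this regime exactly. If one does not wish to quote the projection theorem, I would prove the needed bound by Schur's test with $\varphi(z)=(1-|z|^2)^{-\delta}$: the two resulting integrals, evaluated by the same Forelli--Rudin estimate, are controlled precisely when $\max\{0,(\alpha-\beta)/p\}<\delta<\min\{(\beta+1)/p',(\alpha+1)/p\}$, and such a $\delta$ exists if and only if $p(\beta+1)>\alpha+1$. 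This balancing of the two Forelli--Rudin thresholds, where the sharp exponent is generated, is the delicate point of the whole argument.
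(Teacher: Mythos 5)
Your argument is correct, but note that the paper does not prove this statement at all: it is quoted as Theorem~2.30 of Zhu's book (going back to Coifman--Rochberg), with only the remark that the relevant half extends from lattices to arbitrary separated sequences. So any proof you supply is "different" by default; what is worth comparing is your route against the standard textbook one. For $0<p\le 1$ you do exactly what Zhu does: the $p$-triangle inequality plus the Forelli--Rudin estimate with exponent $c=sp>0$, and you correctly observe that separation is irrelevant there. For $p>1$ the textbook proof typically runs through duality, pairing $f$ with $g\in L^{p'}(dV_\alpha)$ and using the fact that $g\mapsto\{g(a_k)(1-|a_k|^2)^{(n+1+\alpha)/p'}\}$ maps $A^{p'}_\alpha$ into $\ell^{p'}$ for separated sequences; your alternative --- dominating each atom by its average over the pairwise disjoint balls $D(a_k,\rho)$ and reducing to the $L^p(dV_\alpha)$-boundedness of the positive operator $P^+_\beta$ --- is equally standard and your bookkeeping checks out: $n+1+\beta=t$, the cancellation $s-t=-\tfrac{n+1+\alpha}{p}$, the identity $\|h\|_{L^p(dV_\alpha)}^p\asymp\sum_k|\lambda_k|^p$ from disjointness, and the equivalence $p(\beta+1)>\alpha+1\Leftrightarrow s>n/p'$ (with $\beta>-1$ automatic since $\alpha>-1$) are all correct, as is the Schur-test window $\max\{0,(\alpha-\beta)/p\}<\delta<\min\{(\beta+1)/p',(\alpha+1)/p\}$. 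Your approach buys a self-contained, duality-free proof that isolates where the sharp threshold $s>n\max\{1,1/p\}-n/p$ enters; the paper's citation buys brevity. The tail argument for quasinorm convergence is also fine.
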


For $\gamma>1$ and $\zeta \in \Sn$, recall that $\Gamma_\gamma(\zeta)$ is the Kor\'anyi (admissible, non-tangential) approach region defined by \eqref{domain-1}. It is known that for every $r\le 1$ and $\gamma>1$, there exists $\gamma'>1$ so that
\begin{equation}\label{setin}
\bigcup_{z \in \Gamma_\gamma(\zeta)}D(z,r)\subset \Gamma_{\gamma'}(\zeta).
\end{equation}
We will write $\widetilde{\Gamma}(\zeta)$ (and sometimes $\widetilde{\widetilde{\Gamma}}(\zeta)$) to indicate this change of aperture.

Given $z \in \Bn$, we define the set
$$
I(z)=\{\zeta \in \Sn : z \in \Gamma(\zeta)\}\subset \Sn.
$$
 Since $\sigma(I(z))\asymp (1-|z|^2)^n$, an application of Fubini's theorem yields the  important estimate
\begin{equation}\label{EqG}
\int_{\Bn} \varphi(z)d\nu(z)\asymp \int_{\Sn} \left (\int_{\Gamma(\zeta)} \varphi(z) \frac{d\nu(z)}{(1-|z|^2)^{n}} \right )d\sigma(\zeta),
\end{equation}
where $\varphi$ is any positive measurable function and $\nu$ is a finite positive measure.

\subsection{Kahane-Khinchine inequalities}

Consider a sequence of Rademacher functions $r_ k(t)$ (see \cite[Appendix A]{duren1} for example). For almost every $t\in (0,1)$ the sequence $\{\gamma_ k(t)\}$ consists of signs $\pm 1$. We state first the classical Khinchine's inequality (see \cite[Appendix A]{duren1} for example).\medskip

\noindent \textbf{Khinchine's inequality:} Let $0<p<\infty$. Then for any sequence $\{c_k\}$ of complex numbers, we have
\begin{equation}\label{khin}
\left(\sum_k |c_k|^2\right)^{p/2}\asymp \int_0^1 \left|\sum_k c_kr_k(t)\right|^p dt.
\end{equation}

The next result, known as Kahane's inequality,  will be usually applied in connection to Khinchine's inequality. For reference, see Lemma 5 of Luecking \cite{Lue2} or the paper of Kalton \cite{Kal}.\medskip

\noindent \textbf{Kahane's inequality:} Let $X$ be a quasi-Banach space,  and $0<p,q<\infty$. For any sequence $\{x_ k\}\subset X$, one has
\begin{equation}\label{kah}
\left ( \int_{0}^1 \Big \|\sum_ k r_ k(t)\, x_ k \Big \|_{X}^q dt\right )^{1/q} \asymp \left ( \int_{0}^1 \Big \|\sum_ k r_ k(t)\, x_ k \Big \|_{X}^p dt\right )^{1/p}.
\end{equation}
Moreover, the implicit constants can be chosen to depend only on $p$ and $q$, and not on the quasi-Banach space $X$.

\subsection{Tent spaces}

Tent spaces were introduced by Coifman, Meyer and Stein \cite{CMS} in order to study several problems in harmonic analysis, and provide us us with a general framework for questions regarding important spaces such Hardy spaces, Bergman spaces, or  BMOA among others. Luecking \cite{Lue1} used these tent spaces to study embedding theorems for Hardy spaces on $\mathbb{R}^n$,   results that have been obtained in the unit ball $\Bn$ by Arsenovic and Jevtic \cite{Ars, Jev}.

 Let $0<p,q<\infty$, and $Z=\{a_k\}$ be a separated sequence. The tent sequence space $T^p_ q(Z)$ consists of those sequences $\lambda=\{\lambda_k\}$ with
$$\|\lambda\|_{T^{p}_q(Z)}^p : =\int_{\Sn} \left(\sum_{a_k \in \Gamma(\zeta)}|\lambda_k|^q  \right)^{\frac{p}{q}}d\sigma(\zeta)<\infty.$$
Also, $\lambda=\{\lambda_k\} \in T^{p}_\infty (Z)$, if
$$\|\lambda\|_{T^{p}_\infty (Z)}^p : =\int_{\Sn}\left(\sup_{a_k \in \Gamma(\zeta)}|\lambda_k|\right)^p d\sigma(\zeta)<\infty.$$

\noindent Finally, $\lambda=\{\lambda_k\} \in T^{\infty}_q(Z)$, if
$$\|\lambda\|_{T_{q}^\infty(Z)}= \sup_{\zeta\in\Sn}\left(
\sup_{u\in\Gamma(\zeta)}{1\over (1-|u|^2)^n}\sum_{a_k \in Q(u)}|\lambda_k|^q (1-|a_k|^2)^{n}\right)^{1/q}<\infty.$$
We have that $\lambda \in T^{\infty}_q(Z)$ if and only if the measure $d\mu_{\lambda}=\sum_ k |\lambda_ k|^q (1-|a_ k|^2)^n \delta_{a_ k}$ is a Carleson measure. Moreover,  $\|\lambda\|_{T_{q}^\infty(Z)}\asymp \|\mu_{\lambda}\|_{CM_ 1}^{1/q}$. \\

It is well known that different apertures of the Korany approach region, define the same tent sequence  spaces with equivalent quasinorms.\\

\noindent We  need the following duality result for  tent sequence spaces. For the proof, see \cite{Ars, Jev, Lue1}.

\begin{otherth}\label{TTD1}
Let $1<p<\infty$ and $Z=\{a_k\}$ be a separated sequence. If $1<q<\infty$, then the dual of $T^p_q(Z)$ is isomorphic to $T^{p'}_{q'}(Z)$ under the pairing:
$$\langle \lambda ,\mu \rangle_{T^2_2(Z)} =\sum_ k \lambda _ k \,\overline{\mu_ k} (1-|a_ k|^2)^n,\quad \lambda \in T^p_q(Z),\quad \mu \in T^{p'}_{q'}(Z).$$
If $0<q\leq 1$, then the dual of $T^p_q(Z)$ is isomorphic to $T^{p'}_\infty(Z)$ under the same pairing.
\end{otherth}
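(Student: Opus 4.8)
The plan is to establish the duality in the two usual steps, proving first that the stated pairing embeds the candidate dual boundedly into $(T^p_q(Z))^*$ and then that it is onto. Throughout I abbreviate $A_q(\lambda)(\zeta)=\big(\sum_{a_k\in\Gamma(\zeta)}|\lambda_k|^q\big)^{1/q}$, so that $\|\lambda\|_{T^p_q(Z)}=\|A_q(\lambda)\|_{L^p(\Sn)}$, and I record the basic identity, obtained from $\sigma(I(a_k))\asymp(1-|a_k|^2)^n$ and Fubini's theorem, that
$$\sum_k\lambda_k\,\overline{\mu_k}\,(1-|a_k|^2)^n\asymp\int_{\Sn}\Big(\sum_{a_k\in\Gamma(\zeta)}\lambda_k\,\overline{\mu_k}\Big)\,d\sigma(\zeta).$$
The easy inclusion then follows from two successive H\"older inequalities applied to the right-hand side: fiberwise in $k$ one uses $\ell^q$--$\ell^{q'}$ duality when $1<q<\infty$, and for $0<q\le1$ the elementary bound $\sum_{a_k\in\Gamma(\zeta)}|\lambda_k|\le A_q(\lambda)(\zeta)$ paired against $\sup_{a_k\in\Gamma(\zeta)}|\mu_k|$; integrating in $\zeta$ with $L^p$--$L^{p'}$ on $\Sn$ yields $|\langle\lambda,\mu\rangle|\lesssim\|\lambda\|_{T^p_q(Z)}\,\|\mu\|_{T^{p'}_{q'}(Z)}$ (respectively with $T^{p'}_\infty(Z)$).

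For the reverse inclusion in the reflexive range $1<p,q<\infty$ I would realize $T^p_q(Z)$ as a complemented subspace of the Lebesgue--Bochner space $L^p(\Sn;\ell^q)$. The map $\lambda\mapsto G_\lambda$, with $G_\lambda(\zeta)=\{\lambda_k\,\mathbf{1}_{\Gamma(\zeta)}(a_k)\}_k$, is an isometry onto a closed subspace $M$, and the averaging operator $(PH)_k=\sigma(I(a_k))^{-1}\int_{I(a_k)}H_k\,d\sigma$, followed by re-embedding, is a projection of $L^p(\Sn;\ell^q)$ onto $M$. Granting that $P$ is bounded, any $L\in(T^p_q(Z))^*$ extends through the embedding and, by the duality $(L^p(\Sn;\ell^q))^*=L^{p'}(\Sn;\ell^{q'})$, is represented by some $H$; setting $\overline{\mu_k}\,(1-|a_k|^2)^n=\int_{I(a_k)}\overline{H_k}\,d\sigma$ recovers $L=\langle\,\cdot\,,\mu\rangle$ on finitely supported sequences, and the boundedness of $P$ together with the identity above gives $\|\mu\|_{T^{p'}_{q'}(Z)}\lesssim\|H\|\lesssim\|L\|$.

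I expect the boundedness of the averaging projection $P$ to be the main obstacle: since $|(PH)_k|$ is dominated by the non-isotropic Hardy--Littlewood maximal average of $H_k$, its boundedness on $L^p(\Sn;\ell^q)$ is exactly the $\ell^q$-valued Fefferman--Stein maximal inequality on the sphere, and controlling the passage from $\sum_{a_k\in\Gamma(\zeta)}$ to these averages requires the bounded overlap that the separation of $Z$ provides. The case $0<q\le1$ falls outside this framework because the fiber $\ell^q$ is only quasi-Banach; here I would argue directly, setting $\mu_k=\overline{L(e_k)}/(1-|a_k|^2)^n$ (finitely supported sequences being dense, so that $L(\lambda)=\sum_k\lambda_k L(e_k)$), and proving $\mu\in T^{p'}_\infty(Z)$ with $\|\mu\|_{T^{p'}_\infty(Z)}\lesssim\|L\|$ by testing the $L^p$--$L^{p'}$ duality on $\Sn$ against a measurable selection $k(\zeta)$ of indices nearly attaining $\sup_{a_k\in\Gamma(\zeta)}|\mu_k|$. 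Partitioning $\Sn$ by the level sets of this selection turns the resulting quantity into $L$ applied to an explicit test sequence, and the delicate point becomes the bound $\|\lambda\|_{T^p_q(Z)}\lesssim\|g\|_{L^p(\Sn)}$ for these test sequences, which I would again reduce to the boundedness of the Hardy--Littlewood maximal operator together with the separation of $Z$.
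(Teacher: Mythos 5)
The paper offers no proof of this statement: it is quoted as Theorem~C with a pointer to Arsenovic, Jevtic and Luecking, so there is nothing internal to compare against and your attempt must stand on its own. For the reflexive range $1<q<\infty$ your route is the standard one and is essentially sound: the embedding $\lambda\mapsto\{\lambda_k\mathbf{1}_{I(a_k)}(\zeta)\}_k$ realizes $T^p_q(Z)$ isometrically in $L^p(\Sn;\ell^q)$, the averaging operator satisfies $|(PH)_k(\zeta)|\lesssim M(H_k)(\zeta)$ on $I(a_k)$, so its boundedness is the Fefferman--Stein inequality on the homogeneous space $(\Sn,|1-\langle\cdot,\cdot\rangle|,\sigma)$, and Hahn--Banach plus $(L^p(\ell^q))^*=L^{p'}(\ell^{q'})$ finishes. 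The only caveats are routine: take absolute values before invoking $\sigma(I(a_k))\asymp(1-|a_k|^2)^n$ (the comparison of the two pairings is not an identity for complex entries, but the discrepancy is a bounded multiplier absorbed into $\mu_k$), and check measurability of the selections.

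The case $0<q\le1$ contains a genuine gap, precisely at the step you call delicate. The inequality $\|\lambda\|_{T^p_q(Z)}\lesssim\|g\|_{L^p(\Sn)}$ for the selection-based test sequences $\lambda_j=\sigma(I(a_j))^{-1}\int_{E_j}g\,d\sigma$ (with $E_j\subset I(a_j)$ pairwise disjoint) is \emph{false} when $q<1$, and the maximal operator cannot rescue it because it only controls the $\ell^1$-sum over the cone, i.e.\ the case $q=1$. Concretely, for $n=1$ take $g=\mathbf{1}_J$ for an arc $J$ and stack $M+1$ consecutive generations of lattice points over $J$; one can choose a measurable level assignment $\zeta\mapsto m(\zeta)$ so that each $E_j$ occupies exactly the fraction $1/(M+1)$ of $I(a_j)$ (the total measure budget is exactly $\sigma(J)$). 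Then $\lambda_j=1/(M+1)$ for every stacked $j$, each cone $\Gamma(\zeta)$ with $\zeta\in J$ meets $\asymp M$ of these points, so $\sum_{a_j\in\Gamma(\zeta)}\lambda_j^q\asymp(M+1)^{1-q}$ and $\|\lambda\|_{T^p_q(Z)}\gtrsim(M+1)^{(1-q)/q}\|g\|_{L^p(\Sn)}\to\infty$. The known proofs of $(T^p_q)^*=T^{p'}_\infty$ for $q\le1$ therefore proceed differently, e.g.\ by a stopping-time argument on the level sets of $N(\zeta)=\sup_{a_k\in\Gamma(\zeta)}|\mu_k|$, testing only against sequences supported on indices where $|\mu_k|$ realizes the supremum on a fixed proportion of $I(a_k)$, so that the $\ell^q$- and $\ell^1$-sums over each cone are genuinely comparable. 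Since the paper invokes exactly this $q\le1$ case (the exponent $\varrho\le1$ in Theorems 3.4 and 3.5), the gap is not cosmetic and needs to be closed or the result cited.
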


We will use a result concerning factorization of tent sequence spaces, which can be found in \cite{MPPW}. A similar result for tent spaces of functions over the upper half-space could be found in \cite{CV}.

\begin{otherth}\label{TTD2}
Let $0<p,q<\infty$ and $Z=\{a_k\}$ be an $r$-lattice. If $p<p_1, p_2<\infty$,  $q<q_1,q_2<\infty$ and satisfy
$$
\frac{1}{p}=\frac{1}{p_1}+\frac{1}{p_2},\qquad \frac{1}{q}=\frac{1}{q_1}+\frac{1}{q_2}.
$$
Then
$$
T^p_q(Z)=T^{p_1}_{q_1}(Z)\cdot T^{p_2}_{q_2}(Z).
$$
That is, if $\alpha\in T^{p_ 1}_{q_ 1}(Z)$ and $\beta \in T^{p_2}_{q_2}(Z)$, then $\alpha\cdot \beta \in T^p_q(Z)$ with $\|\alpha \cdot \beta \|_{T^p_q(Z)}\lesssim \|\alpha\|_{T^{p_ 1}_{q_ 1}(Z)}\cdot \|\beta \|_{T^{p_2}_{q_2}(Z)}$; and conversely, if $\lambda \in T^p_ q(Z)$, then there are sequences $\alpha \in T^{p_ 1}_{q_ 1}(Z)$ and $\beta \in T^{p_2}_{q_2}(Z)$ such that $\lambda=\alpha \cdot \beta$, and $\|\alpha\|_{T^{p_ 1}_{q_ 1}(Z)}\cdot \|\beta \|_{T^{p_2}_{q_2}(Z)}\lesssim \|\lambda \|_{T^p_q(Z)}$.
\end{otherth}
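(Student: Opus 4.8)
The plan is to establish the two halves of the identity separately. The inclusion $T^{p_1}_{q_1}(Z)\cdot T^{p_2}_{q_2}(Z)\subseteq T^p_q(Z)$ with the accompanying norm bound is the soft part: fixing $\zeta\in\Sn$ and applying H\"older's inequality in the index $k$ with the conjugate exponents $q_1/q$ and $q_2/q$ (legitimate because $q<q_1,q_2$ yields $q/q_1+q/q_2=1$) gives
$$\sum_{a_k\in\Gamma(\zeta)}|\alpha_k\beta_k|^q\le\Big(\sum_{a_k\in\Gamma(\zeta)}|\alpha_k|^{q_1}\Big)^{q/q_1}\Big(\sum_{a_k\in\Gamma(\zeta)}|\beta_k|^{q_2}\Big)^{q/q_2}.$$
Raising to the power $p/q$, integrating over $\Sn$, and applying H\"older once more with the conjugate exponents $p_1/p$ and $p_2/p$ produces $\|\alpha\beta\|_{T^p_q}\lesssim\|\alpha\|_{T^{p_1}_{q_1}}\|\beta\|_{T^{p_2}_{q_2}}$. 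Both applications are valid for all $0<p,q<\infty$ since $q_i/q>1$ and $p_i/p>1$.

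The real content is the converse factorization. Given $\lambda\in T^p_q(Z)$ we put the phases of $\lambda_k$ into a single factor and assume $\lambda_k\ge0$. Let $A(\zeta)=\big(\sum_{a_k\in\Gamma(\zeta)}\lambda_k^q\big)^{1/q}$ be the area function, so $\|A\|_{L^p(\Sn)}=\|\lambda\|_{T^p_q}$, and for a small exponent $0<\varepsilon<p$ set
$$M_k=\Big(\frac1{\sigma(Q_k)}\int_{Q_k}A(\eta)^\varepsilon\,d\sigma(\eta)\Big)^{1/\varepsilon},\qquad Q_k=I(a_k).$$
Two properties of $M_k$ will be used. First, $a_k\in\Gamma(\eta)$ for every $\eta\in Q_k$, so $A(\eta)\ge\lambda_k$ there and hence $M_k\ge\lambda_k$. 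Second, since each $Q_k$ is comparable to a nonisotropic ball centered at $a_k/|a_k|$ and contains $\zeta$ whenever $a_k\in\Gamma(\zeta)$, the function $\mathcal M(\zeta)=\sup_{a_k\in\Gamma(\zeta)}M_k$ is dominated by the $1/\varepsilon$-th power of the Hardy--Littlewood maximal function of $A^\varepsilon$, which gives $\|\mathcal M\|_{L^p}\lesssim\|A\|_{L^p}$ precisely because $\varepsilon<p$. Writing $\theta_i=q/q_i$ (so $\theta_1+\theta_2=1$) and $\gamma=p/p_1-q/q_1=-(p/p_2-q/q_2)$, define
$$\alpha_k=\lambda_k^{q/q_1}M_k^{\gamma},\qquad \beta_k=\lambda_k^{q/q_2}M_k^{-\gamma},$$
so that $\alpha_k\beta_k=\lambda_k$. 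After interchanging the two factors if necessary we may assume $\gamma\ge0$.

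For the factor carrying the positive power $M_k^{\gamma}$ one pulls the maximal function out of the tent sum, $\sum_{a_k\in\Gamma(\zeta)}\alpha_k^{q_1}\le\mathcal M(\zeta)^{\gamma q_1}A(\zeta)^q$, whence after raising to the power $p_1/q_1$, integrating, and applying H\"older with the conjugate exponents $p/(\gamma p_1)$ and $p/(\theta_1 p_1)$ (admissible because $\gamma\ge0$ and $\gamma p_1+\theta_1 p_1=p$), the bound $\|\mathcal M\|_{L^p}\lesssim\|A\|_{L^p}$ yields $\|\alpha\|_{T^{p_1}_{q_1}}\lesssim\|A\|_{L^p}^{p/p_1}=\|\lambda\|_{T^p_q}^{p/p_1}$.

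The main obstacle is the remaining factor $\beta$, which carries the negative power $M_k^{-\gamma}$: here a supremum cannot be extracted, and the crude per-term estimate $M_k\ge\lambda_k$ (which would merely replace the tent sum by $\sum_{a_k\in\Gamma(\zeta)}\lambda_k^{(p/p_2)q_2}$, i.e.\ by an area function of larger inner exponent) is too lossy exactly when $\gamma>0$. The plan is to control $\sum_{a_k\in\Gamma(\zeta)}\lambda_k^q M_k^{-\gamma q_2}$ by decomposing the indices into the level sets $\{2^{j}\le\mathcal M\le 2^{j+1}\}$ of the maximal function (a stopping-time, Calder\'on--Zygmund type decomposition adapted to the shadows $Q_k$), and on each level exploiting the averaging definition of $M_k$ to convert the negative power into a gain measured by the $L^p$-size of $A$ recorded by $\mathcal M$; summing the levels and invoking once more the $L^p$-boundedness of the maximal operator should give $\|\beta\|_{T^{p_2}_{q_2}}\lesssim\|\lambda\|_{T^p_q}^{p/p_2}$. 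Multiplying the two factor estimates then gives $\|\alpha\|_{T^{p_1}_{q_1}}\|\beta\|_{T^{p_2}_{q_2}}\lesssim\|\lambda\|_{T^p_q}$, which is the assertion. This level-set analysis of the unfavorable factor is where the essential difficulty lies.
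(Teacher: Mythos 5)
Your first half (the inclusion $T^{p_1}_{q_1}\cdot T^{p_2}_{q_2}\subseteq T^p_q$ via two applications of H\"older) is correct and complete, and your setup for the converse --- the $\varepsilon$-averaged quantities $M_k$ over the shadows $I(a_k)$, the exponent split $\alpha_k=\lambda_k^{q/q_1}M_k^{\gamma}$, $\beta_k=\lambda_k^{q/q_2}M_k^{-\gamma}$ with $\gamma=p/p_1-q/q_1$, and the estimate $\|\alpha\|_{T^{p_1}_{q_1}}\lesssim\|\lambda\|_{T^p_q}^{p/p_1}$ via the maximal function --- is sound; the exponent bookkeeping checks out. For context: the paper does not prove this theorem at all, but quotes it from \cite{MPPW} (with \cite{CV} for the continuous analogue), and your construction is essentially the standard Cohn--Verbitsky-type scheme used there.

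However, there is a genuine gap, and it sits exactly where you say the essential difficulty lies: the bound $\|\beta\|_{T^{p_2}_{q_2}}\lesssim\|\lambda\|_{T^p_q}^{p/p_2}$ for the factor carrying the negative power $M_k^{-\gamma}$ is never proved. What you offer is a plan (``decompose into level sets of $\mathcal M$ \dots should give''), not an argument. This is not a routine verification one can wave at: the whole content of the factorization theorem is precisely that the loss $M_k^{-\gamma q_2}$ inside the cone sum can be compensated on average, and making this rigorous requires an actual stopping-time/good-$\lambda$ argument on the sets $\{\mathcal M>2^j\}$ --- one must show that the indices $k$ with $M_k\asymp 2^j$ contribute at most $2^{j(\text{something})}\sigma(\{\mathcal M>2^j\})$-type terms to $\int(\sum_{a_k\in\Gamma(\zeta)}\lambda_k^qM_k^{-\gamma q_2})^{p_2/q_2}d\sigma$, handle the fact that a single cone $\Gamma(\zeta)$ meets many levels, and then sum a geometric series. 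None of these steps is carried out, and the per-term bound $M_k\ge\lambda_k$ that you do justify is, as you yourself observe, insufficient whenever $\gamma>0$. As written, the proposal establishes only one of the two factor estimates needed for the converse inclusion, so the factorization direction remains unproved.
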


We usually are going to obtain first the  discrete versions of the conditions we really need.  The next results are useful in order to get the continuous characterizations from the discrete ones.

\begin{lemma}\label{discrete-1}
Let $0<p,q<\infty$, $\beta>-1$  and let $\mu$ be a positive measure. Then there exists $r_0\in (0,1)$ such that for $0<r<r_0$ and any $r$-lattice $\{a_k\}$,
\begin{eqnarray*}
&&\int_{\Sn}\left(\int_{\Gamma(\zeta)}\rho_{\alpha,\mu}(z,r)^p (1-|z|)^\beta dV(z)\right)^q d\sigma(\zeta)
\\
&&\verb#   #\lesssim \int_{\Sn}\left(\sum_{a_k\in \Gamma(\zeta)} \rho_{\alpha,\mu}(a_k,2r)^p (1-|a_k|)^{n+1+\beta}\right)^qd\sigma(\zeta).
\end{eqnarray*}
\end{lemma}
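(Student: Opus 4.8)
The plan is to discretize the inner integral over $\Gamma(\zeta)$ by means of the lattice $\{a_k\}$ and then pass from a sum over a slightly wider cone to a sum over $\Gamma(\zeta)$ itself using the aperture-independence of the tent sequence quasinorms. Fix $r<r_0$ for a suitable $r_0\in(0,1)$ and set $D_k=D(a_k,r)$. Since the positive integrand is covered by property (i) of the $r$-lattice, $\Bn=\bigcup_k D_k$, I would first write
\[
\int_{\Gamma(\zeta)}\rho_{\alpha,\mu}(z,r)^p(1-|z|)^\beta\,dV(z)\le \sum_{k:\,D_k\cap\Gamma(\zeta)\neq\emptyset}\int_{D_k}\rho_{\alpha,\mu}(z,r)^p(1-|z|)^\beta\,dV(z),
\]
noting that overcounting on the right is harmless because we only seek an upper bound.

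The key pointwise step is the comparison of $\rho_{\alpha,\mu}(z,r)$ with $\rho_{\alpha,\mu}(a_k,2r)$ on $D_k$. For $z\in D_k$ the triangle inequality in the Bergman metric gives $D(z,r)\subset D(a_k,2r)$, hence $\mu(D(z,r))\le\mu(D(a_k,2r))$; combined with the standard volume estimates $V_\alpha(D(z,r))\asymp(1-|z|)^{n+1+\alpha}\asymp(1-|a_k|)^{n+1+\alpha}\asymp V_\alpha(D(a_k,2r))$ and $1-|z|\asymp1-|a_k|$ (the constants depending only on $r$), this yields $\rho_{\alpha,\mu}(z,r)\lesssim\rho_{\alpha,\mu}(a_k,2r)$ on $D_k$. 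Inserting this and using $V(D_k)\asymp(1-|a_k|)^{n+1}$ gives
\[
\int_{D_k}\rho_{\alpha,\mu}(z,r)^p(1-|z|)^\beta\,dV(z)\lesssim \rho_{\alpha,\mu}(a_k,2r)^p(1-|a_k|)^{n+1+\beta}.
\]

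It remains to control the index set of the sum. If $D_k\cap\Gamma(\zeta)\neq\emptyset$, then picking $w\in D_k\cap\Gamma(\zeta)$ and applying \eqref{setin} forces $a_k\in\widetilde{\Gamma}(\zeta)$ for a fixed wider aperture. Summing over $k$ and then raising to the power $q$ and integrating over $\Sn$, I obtain
\[
\int_{\Sn}\left(\int_{\Gamma(\zeta)}\rho_{\alpha,\mu}(z,r)^p(1-|z|)^\beta\,dV(z)\right)^q d\sigma(\zeta)\lesssim \int_{\Sn}\Big(\sum_{a_k\in\widetilde{\Gamma}(\zeta)}\lambda_k\Big)^q d\sigma(\zeta),
\]
where $\lambda_k=\rho_{\alpha,\mu}(a_k,2r)^p(1-|a_k|)^{n+1+\beta}$; the right-hand side is the $q$-th power of the tent sequence quasinorm of $\{\lambda_k\}$ computed with the wider aperture $\widetilde{\Gamma}$.

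The only nonroutine point, and hence the main obstacle, is precisely this change of aperture: the naive discretization produces a sum over $\widetilde{\Gamma}(\zeta)$ rather than over $\Gamma(\zeta)$. To close the argument I would invoke the fact recorded above that different apertures of the Kor\'anyi region define the same tent sequence spaces with equivalent quasinorms, so that
\[
\int_{\Sn}\Big(\sum_{a_k\in\widetilde{\Gamma}(\zeta)}\lambda_k\Big)^q d\sigma(\zeta)\asymp\int_{\Sn}\Big(\sum_{a_k\in\Gamma(\zeta)}\lambda_k\Big)^q d\sigma(\zeta),
\]
which is exactly the claimed estimate. Everything else reduces to the elementary comparisons on Bergman metric balls; in particular no smallness of $r$ beyond $r\le 1$ is genuinely needed for this direction, and the stated $r_0$ only guarantees uniform constants in the comparisons above.
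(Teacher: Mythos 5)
Your proposal is correct and follows essentially the same route as the paper's proof: decompose $\Gamma(\zeta)$ into the lattice balls $D_k$, compare $\rho_{\alpha,\mu}(z,r)$ on $D_k$ with $\rho_{\alpha,\mu}(a_k,2r)$ via the inclusion $D(z,r)\subset D(a_k,2r)$, and absorb the resulting aperture enlargement using the equivalence of tent sequence quasinorms for different apertures.
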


\begin{proof}
Recall $D_k=D(a_k, r)$ and  $\widetilde{D}_k=D(a_k, 2r)$.  Then
\begin{eqnarray*}
 && \int_{\Gamma(\zeta)}\rho_{\alpha,\mu}(z,r)^p (1-|z|)^\beta dV(z)
 \\&&\verb#   #\leq \sum_{k: D_k\bigcap\Gamma(\zeta)\neq\emptyset}\int_{D_k}\rho_{\alpha,\mu}(z,r)^p (1-|z|)^\beta dV(z)
 \\
   && \verb#   #\lesssim  \sum_{k: D_k\bigcap\Gamma(\zeta)\neq\emptyset}(1-|a_k|)^{n+1+\beta}\sup_{z\in D_k}\rho_{\alpha,\mu}(z,r)^p
   \\
   && \verb#   #\lesssim \sum_{a_k\in \widetilde{\Gamma}(\zeta)}(1-|a_k|)^{n+1+\beta}\rho_{\alpha,\mu}(a_k,2r) ^p.
\end{eqnarray*}
This yields the desired result, because  different apertures define the same tent spaces with equivalent quasinorms.
\end{proof}

By a similar argument, we obtain the following result.

\begin{lemma}\label{discrete-2}
Let $0<p,q<\infty$, $\beta>-1$ and let $\mu$ be a positive measure. Then there exists $r_0\in (0,1)$ such that for $0<r<r_0$ and any $r$-lattice $\{a_k\}$,
\begin{eqnarray*}
\int_{\Sn}\sup_{z\in \Gamma(\zeta)}\rho_{\alpha,\mu}(z,r)^p (1-|z|)^\beta d\sigma(\zeta)
\lesssim
\int_{\Sn}\sup_{a_k\in \Gamma(\zeta)}\rho_{\alpha,\mu}(a_k,2r)^p (1-|a_k|)^\beta d\sigma(\zeta).
\end{eqnarray*}
\end{lemma}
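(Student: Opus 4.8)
The plan is to mimic the proof of Lemma \ref{discrete-1}, since the statement of Lemma \ref{discrete-2} is the supremum analogue of the integral estimate just established. The main idea is to bound the pointwise supremum of $\rho_{\alpha,\mu}(z,r)^p (1-|z|)^\beta$ over the approach region $\Gamma(\zeta)$ by a corresponding supremum over the lattice points lying in a slightly wider approach region $\widetilde{\Gamma}(\zeta)$, and then invoke the fact that different apertures define the same tent spaces (here the tent space $T^1_\infty$) with equivalent quasinorms.

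\begin{proof}
Fix $\zeta\in\Sn$ and let $z\in\Gamma(\zeta)$. By property (i) of the $r$-lattice, there exists some $k$ with $z\in D_k=D(a_k,r)$, and in particular $D_k\cap\Gamma(\zeta)\neq\emptyset$, which forces $a_k\in\widetilde{\Gamma}(\zeta)$ by \eqref{setin}. Since $z\in D_k$, we have $1-|z|\asymp 1-|a_k|$, so that $(1-|z|)^\beta\lesssim (1-|a_k|)^\beta$; moreover, because the Bergman metric ball $D(z,r)$ is contained in $D(a_k,2r)=\widetilde{D}_k$, monotonicity of $\mu$ together with the estimate $V_\alpha(D(z,r))\asymp V_\alpha(D(a_k,2r))$ yields $\rho_{\alpha,\mu}(z,r)\lesssim \rho_{\alpha,\mu}(a_k,2r)$. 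Combining these, for every $z\in\Gamma(\zeta)$,
\begin{equation*}
\rho_{\alpha,\mu}(z,r)^p(1-|z|)^\beta\lesssim \rho_{\alpha,\mu}(a_k,2r)^p(1-|a_k|)^\beta\le \sup_{a_k\in\widetilde{\Gamma}(\zeta)}\rho_{\alpha,\mu}(a_k,2r)^p(1-|a_k|)^\beta.
\end{equation*}
Taking the supremum over $z\in\Gamma(\zeta)$ and then integrating over $\Sn$ gives
\begin{equation*}
\int_{\Sn}\sup_{z\in\Gamma(\zeta)}\rho_{\alpha,\mu}(z,r)^p(1-|z|)^\beta\,d\sigma(\zeta)\lesssim \int_{\Sn}\sup_{a_k\in\widetilde{\Gamma}(\zeta)}\rho_{\alpha,\mu}(a_k,2r)^p(1-|a_k|)^\beta\,d\sigma(\zeta).
\end{equation*}
Finally, since different apertures of the Kor\'anyi approach region define the same tent sequence spaces with equivalent quasinorms, the right-hand side is comparable to the integral with $\widetilde{\Gamma}(\zeta)$ replaced by $\Gamma(\zeta)$, which yields the desired estimate.
\end{proof}

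The step requiring the most care is the transition from the wider aperture $\widetilde\Gamma$ back to $\Gamma$, but this is precisely the aperture-invariance of tent spaces quoted in the preliminaries, so no genuine obstacle arises. The argument is strictly simpler than that of Lemma \ref{discrete-1} because no summation appears: one replaces the covering-plus-summation step there by a single pointwise domination, which is exactly why the two lemmas share the same structural proof and the second can be dispatched by the phrase \emph{by a similar argument}.
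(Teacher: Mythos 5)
Your proof is correct and is precisely the "similar argument" the paper alludes to: it replaces the covering-plus-summation step of Lemma \ref{discrete-1} by the pointwise domination $\rho_{\alpha,\mu}(z,r)^p(1-|z|)^\beta\lesssim\rho_{\alpha,\mu}(a_k,2r)^p(1-|a_k|)^\beta$ for $z\in D_k$ with $a_k\in\widetilde{\Gamma}(\zeta)$, followed by the aperture-invariance of $T^1_\infty$. The paper gives no written proof beyond the phrase "by a similar argument," and yours fills that in faithfully.
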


\subsection{Estimates involving Bergman kernels}

We also need the following well-known Forelli-Rudin integral type estimates, that have been very useful in this area of analysis (see \cite[Theorem 1.12]{ZhuBn} for example).
\begin{otherl}\label{IctBn}
Let $t>-1$ and $s>0$. Then
$$\int_{\Sn} \frac{d\sigma(\zeta)}{|1-\langle z,\zeta\rangle |^{n+s}} \lesssim (1-|z|^2)^{-s}$$
and
$$ \int_{\Bn} \frac{(1-|u|^2)^t\,dV(u)}{|1-\langle z,u\rangle |^{n+1+t+s}}\lesssim (1-|z|^2)^{-s}$$
for all $z\in \Bn$.
\end{otherl}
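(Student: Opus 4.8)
\emph{The plan is to} establish the spherical estimate first and then deduce the ball estimate from it by integrating in polar coordinates. Throughout, write $d(z,\zeta)=|1-\langle z,\zeta\rangle|$ for $z\in\Bn$, $\zeta\in\Sn$, and record two elementary facts: the lower bound
$$
d(z,\zeta)\ge \Re(1-\langle z,\zeta\rangle)=1-\Re\langle z,\zeta\rangle\ge 1-|z|\ge \tfrac12(1-|z|^2),
$$
and the trivial upper bound $d(z,\zeta)\le 1+|z|\le 2$.

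First I would prove the spherical estimate
$$
\int_{\Sn}\frac{d\sigma(\zeta)}{d(z,\zeta)^{n+s}}\lesssim (1-|z|^2)^{-s}
$$
by a dyadic decomposition of $\Sn$ according to the size of $d(z,\cdot)$. The only input needed is the standard uniform sublevel-set bound $\sigma(\{\zeta\in\Sn:d(z,\zeta)<\delta\})\lesssim \delta^n$, valid for all $z\in\overline{\Bn}$ and $\delta>0$, which is exactly the non-isotropic ball geometry already underlying the Carleson-measure theory used in the paper. Setting $\delta_0=\tfrac12(1-|z|^2)$ and splitting $\Sn$ into the annuli $A_j=\{\zeta:2^{j-1}\delta_0\le d(z,\zeta)<2^j\delta_0\}$ for $j\ge 1$ (only those with $2^j\delta_0\lesssim 1$ are nonempty, by the upper bound on $d$), the integrand is $\lesssim (2^j\delta_0)^{-(n+s)}$ on $A_j$ while $\sigma(A_j)\lesssim (2^j\delta_0)^n$; hence the integral is $\lesssim \sum_j (2^j\delta_0)^{-s}=\delta_0^{-s}\sum_j 2^{-js}\asymp\delta_0^{-s}\asymp (1-|z|^2)^{-s}$, the geometric series converging precisely because $s>0$. (Alternatively one may expand $d(z,\zeta)^{-(n+s)}$ as the product of two binomial series, integrate term by term using orthogonality on $\Sn$ together with the moment formula $\int_{\Sn}|\zeta_1|^{2k}\,d\sigma=(n-1)!\,k!/(n+k-1)!$, and recognize the resulting power series $\sum_k c_k|z|^{2k}$, whose coefficients satisfy $c_k\asymp k^{s-1}$, as comparable to $(1-|z|^2)^{-s}$.)

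Then I would deduce the ball estimate. Writing $u=r\xi$ with $r\in(0,1)$ and $\xi\in\Sn$, and noting $\langle z,u\rangle=\langle rz,\xi\rangle$, polar coordinates give
$$
\int_{\Bn}\frac{(1-|u|^2)^t\,dV(u)}{d(z,u)^{n+1+t+s}}=2n\int_0^1 r^{2n-1}(1-r^2)^t\Big(\int_{\Sn}\frac{d\sigma(\xi)}{d(rz,\xi)^{n+(1+t+s)}}\Big)dr.
$$
Applying the already-proved spherical estimate with exponent $s'=1+t+s>0$ bounds the inner integral by $(1-r^2|z|^2)^{-(1+t+s)}$, and after the substitution $x=r^2$ (which absorbs $r^{2n-1}$ into a factor $x^{n-1}\le 1$) the problem reduces to the one-variable estimate
$$
\int_0^1 \frac{(1-x)^t}{(1-|z|^2 x)^{1+t+s}}\,dx\lesssim (1-|z|^2)^{-s},
$$
the $a\to 1^-$ case of the classical lemma $\int_0^1 (1-x)^c(1-ax)^{-d}\,dx\asymp (1-a)^{1+c-d}$ for $c>-1$ and $d-c>1$ (here $c=t$, $d=1+t+s$, so $1+c-d=-s$). \emph{I expect} the only delicate points to be (i) the uniform sublevel-set measure bound $\sigma(\{d(z,\cdot)<\delta\})\lesssim\delta^n$, and (ii) the proof of the one-variable lemma near $a=1$ (split $[0,1]$ at $x=a$ and estimate each piece separately). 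Neither is a genuine obstacle: both are standard, and the remainder of the argument is routine bookkeeping around them.
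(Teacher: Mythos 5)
Your proof is correct, but note that the paper itself contains no proof of this lemma: it is quoted as a well-known Forelli--Rudin estimate with a citation to Theorem 1.12 of Zhu's book, and the proof there is essentially the argument you relegate to your parenthetical remark --- expand $|1-\langle z,\zeta\rangle |^{-(n+s)}$ as a product of two binomial series, integrate term by term using orthogonality of monomials on $\Sn$, and identify the coefficient asymptotics $c_k\asymp k^{s-1}$ via Stirling's formula. Your main route is genuinely different and every step checks out: the lower bound $|1-\langle z,\zeta\rangle |\ge 1-|z|\ge \tfrac12(1-|z|^2)$, the dyadic annuli built on the uniform sublevel bound $\sigma\left(\{\zeta\in\Sn : |1-\langle z,\zeta\rangle |<\delta\}\right)\lesssim \delta^n$ (which for interior $z$ follows from the boundary case by writing $z=|z|\zeta_0$ and noting $|1-\langle \zeta_0,\zeta\rangle |\le 2|1-\langle z,\zeta\rangle |$), the geometric series converging precisely because $s>0$, and then polar coordinates with $\langle z,r\xi\rangle=\langle rz,\xi\rangle$ feeding the spherical estimate at exponent $1+t+s>0$ into the radial integral, finished by the classical one-variable bound $\int_0^1 (1-x)^t(1-ax)^{-(1+t+s)}\,dx\lesssim (1-a)^{-s}$. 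Comparing the two: your argument is elementary, real-variable, and self-contained modulo the non-isotropic ball measure estimate that already underlies the Carleson-measure machinery of the paper, and it adapts to kernels admitting no useful series expansion; the series/Stirling proof cited by the paper buys more, namely two-sided asymptotics and the borderline case of exponent $0$, where the integral grows like $\log\frac{1}{1-|z|^2}$ --- exactly where your geometric series would diverge. Since the lemma as stated asks only for upper bounds with $s>0$, your proof fully suffices.
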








\section{The proof of main result}\label{Main}

We will separate   Theorem \ref{th-1} into  four results according to what case is considered.

\begin{theorem}\label{th-01}
Let $\alpha>-1$, $0<p\leq \min\{s,q\}<\infty$, and let $\mu$ be a positive Borel measure on $\Bn$. Then $A_{\mu,s}: A_\alpha^p\rightarrow L^q (\Sn)$ is bounded if and only if
$$
F_\mu(z):=\frac{\rho_{\alpha,\mu}(z,t)}{(1-|z|)^{\frac{(n+1+\alpha)(s-p)}{p}+\frac{n(q-s)}{q}}}
$$
is bounded in $\Bn$. Moreover, $\|A_{\mu,s}\|^s_{A_\alpha^p\rightarrow L^q (\Sn)}\asymp \sup\limits_{z\in \Bn}F_\mu(z)$.
\end{theorem}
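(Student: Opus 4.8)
The plan is to establish the two implications separately, after remarking that the boundedness of $F_\mu$ does not depend on the chosen radius, so I may fix a small $r>0$ admitting an $r$-lattice and work with it throughout.

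\emph{Necessity.} I would test the operator on the normalized kernels $f_a(z)=(1-|a|^2)^{b}(1-\langle z,a\rangle)^{-b-(n+1+\alpha)/p}$ for a large fixed $b>0$ and $a\in\Bn$. By the Forelli--Rudin estimate (Lemma \ref{IctBn}) these satisfy $\|f_a\|_{A^p_\alpha}\lesssim 1$, and since $|1-\langle z,a\rangle|\asymp 1-|z|^2\asymp 1-|a|^2$ for $z\in D(a,r)$ we get $|f_a(z)|\asymp (1-|a|^2)^{-(n+1+\alpha)/p}$ there. Using that there is a set $G_a\subset\Sn$ with $\sigma(G_a)\asymp(1-|a|^2)^n$ for which $D(a,r)\subset\Gamma(\zeta)$ whenever $\zeta\in G_a$, I would restrict the defining integral of $A_{\mu,s}(f_a)$ to $D(a,r)$ and insert $\mu(D(a,r))\asymp\rho_{\alpha,\mu}(a,r)(1-|a|^2)^{n+1+\alpha}$ to obtain a lower bound for $\|A_{\mu,s}(f_a)\|_{L^q(\Sn)}$. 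Comparing this with $\|A_{\mu,s}\|\cdot\|f_a\|_{A^p_\alpha}$ and simplifying the exponents yields $\rho_{\alpha,\mu}(a,r)\lesssim\|A_{\mu,s}\|^s\,(1-|a|)^{\theta}$, where $\theta=\frac{(n+1+\alpha)(s-p)}{p}+\frac{n(q-s)}{q}$; that is, $\sup_{\Bn}F_\mu\lesssim\|A_{\mu,s}\|^s$.

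\emph{Sufficiency.} Assuming $\rho_{\alpha,\mu}(z,r)\lesssim(1-|z|)^{\theta}$, I would discretize: fix an $r$-lattice $\{a_k\}$, cover $\Gamma(\zeta)$ by the balls $D_k=D(a_k,r)$, and on each $D_k$ replace $|f|^s$ by its supremum while using $1-|z|\asymp 1-|a_k|$. The sub-mean value property gives $\sup_{D_k}|f|^p\lesssim(1-|a_k|)^{-(n+1+\alpha)}c_k$ with $c_k=\int_{\widetilde D_k}|f|^pdV_\alpha$, while the hypothesis combined with $\mu(D_k)\asymp\rho_{\alpha,\mu}(a_k,r)(1-|a_k|)^{n+1+\alpha}$ controls the measure. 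The computation hinges on the algebraic identity $\theta+1+\alpha=s\big(\frac{n+1+\alpha}{p}-\frac{n}{q}\big)$, after which the inner integral is dominated by $\sum_{a_k\in\widetilde\Gamma(\zeta)}d_k^s$ with $d_k^p=(1-|a_k|)^{-np/q}c_k$; the change of aperture is harmless since different apertures yield equivalent tent quasinorms. As $\sum_k c_k\lesssim\|f\|_{A^p_\alpha}^p$ by the bounded overlap of the $\widetilde D_k$, the whole matter reduces to the scalar inequality
$$\int_{\Sn}\Big(\sum_{a_k\in\Gamma(\zeta)}c_k^{s/p}(1-|a_k|)^{-ns/q}\Big)^{q/s}d\sigma(\zeta)\lesssim\Big(\sum_k c_k\Big)^{q/p}.$$

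To close the scalar inequality I would use $\sigma(I(a_k))\asymp(1-|a_k|)^n$ and split according to the position of $q$ relative to $s$ (both exceed $p$). When $q\ge s$ the outer exponent $q/s\ge1$, so Minkowski's inequality in $L^{q/s}(\Sn)$ reduces the left-hand side to $\big(\sum_k c_k^{s/p}\big)^{q/s}$, which is $\le(\sum_k c_k)^{q/p}$ by the embedding $\ell^1\hookrightarrow\ell^{s/p}$. When $p\le q<s$ the outer exponent is at most $1$, so the elementary bound $(\sum)^{q/s}\le\sum(\cdot)^{q/s}$ followed by Fubini turns the left-hand side into a constant times $\sum_k c_k^{q/p}$, which is $\le(\sum_k c_k)^{q/p}$ by $\ell^1\hookrightarrow\ell^{q/p}$. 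I expect the main obstacle to be precisely this reduction: making the weights collapse through the identity for $\theta+1+\alpha$, and recognizing that the two regimes $q\ge s$ and $q<s$ must be handled by Minkowski and by subadditivity respectively. Combining both directions then gives $\|A_{\mu,s}\|^s_{A^p_\alpha\to L^q(\Sn)}\asymp\sup_{\Bn}F_\mu$.
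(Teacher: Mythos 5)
Your necessity argument coincides with the paper's: the same normalized kernels $f_a$, the same localization to $D(a,r)$, and integration over a set of $\zeta$'s of measure $\asymp(1-|a|^2)^n$ whose Kor\'anyi regions capture $D(a,r)$. Your sufficiency argument is correct but takes a genuinely different route. The paper also starts from the pointwise bound $|A_{\mu,s}f(\zeta)|^s\lesssim\int_{\widetilde\Gamma(\zeta)}|f|^s\rho_{\alpha,\mu}(z,r)\,dV_\alpha(z)/(1-|z|)^n$ and uses the lattice with $p/s\le1$, but it then \emph{returns to a continuous integral}, obtaining $|A_{\mu,s}f(\zeta)|^p\lesssim(\sup F_\mu)^{p/s}\int_{\widetilde{\widetilde\Gamma}(\zeta)}(1-|w|)^{-np/q}|f(w)|^p\,dV_\alpha(w)$, and closes by splitting on $q=p$ versus $q>p$: in the latter case a H\"older step converts $\bigl(\int(1-|w|)^{-np/q}|f|^p\,dV_\alpha\bigr)^{q/p}$ into $\bigl(\int_{\widetilde{\widetilde\Gamma}(\zeta)}|f|^p(1-|w|)^{-n}dV_\alpha\bigr)\cdot\|f\|_{A^p_\alpha}^{q-p}$, after which \eqref{EqG} finishes. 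You instead stay fully discrete, reduce to the scalar tent-type inequality $\int_{\Sn}\bigl(\sum_{a_k\in\Gamma(\zeta)}c_k^{s/p}(1-|a_k|)^{-ns/q}\bigr)^{q/s}d\sigma\lesssim(\sum_k c_k)^{q/p}$, and split on $q\ge s$ versus $q<s$, using Minkowski's inequality in $L^{q/s}(\Sn)$ with $\sigma(I(a_k))\asymp(1-|a_k|)^n$ in the first regime and $(q/s)$-subadditivity plus Fubini in the second, finishing with $\ell^1\hookrightarrow\ell^{s/p}$ (resp. $\ell^{q/p}$); I checked the exponent bookkeeping, including the identity $\theta+1+\alpha=s(\tfrac{n+1+\alpha}{p}-\tfrac{n}{q})$ and the bounded-overlap bound $\sum_k c_k\lesssim\|f\|_{A^p_\alpha}^p$, and it all works. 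The paper's route is slightly shorter and avoids any case analysis involving $s$; yours makes the mechanism more transparent (the hypothesis on $F_\mu$ is exactly what collapses the weights to the universal sequence inequality) and is the kind of discrete reduction that the paper itself deploys later for the harder cases $q<p$.
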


\begin{proof}
We first consider the necessity.  Fixing any $a\in \Bn$, we take
\[
f_a(z)=\frac{(1-|a|^2)^{m}}{(1-\langle z,a\rangle)^{m+\frac{n+1+\alpha}{p}}}
\]
where  $m>0$. It is easy to see that $\|f_a\|_{A_\alpha^p}\asymp 1$ and
\[
\big |f_a(z)\big |\asymp \frac{1}{(1-|a|^2)^{\frac{n+1+\alpha}{p}}}, \qquad z\in D(a, r)
\]
for $r>0$.
 Due to \eqref{setin} and the fact that $A_{\mu,s}$ is bounded from $A_\alpha^p$ to $L^q (\Sn)$, we get
\begin{eqnarray*}
 && (1-|a|^2)^n\left(\frac{\mu(D(a,r))}{ (1-|a|^2)^{n+\frac{(n+1+\alpha)s}{p}}}\right)^{q/s} \asymp \int_{I(a)}\left(\int_{D(a,r)}|f_a(z)|^s\frac{d\mu(z)}{(1-|z|^2)^n}\right)^{q/s}d\sigma(\zeta)
 \\
   && \verb#                         #\le   C\int_{\Sn}\left(\int_{\widetilde{\Gamma}(\zeta)}|f_a(z)|^s\frac{d\mu(z)}{(1-|z|)^n}\right)^{q/s}d\sigma(\zeta)
  \\
   && \verb#                         #\asymp\int_{\Sn}\left(\int_{\Gamma(\zeta)}|f_a(z)|^s\frac{d\mu(z)}{(1-|z|)^n}\right)^{q/s}d\sigma(\zeta)
    \\
   && \verb#                         #\le C\|A_{\mu,s}\|^q_{A_\alpha^p\rightarrow L^q (\Sn)}.
\end{eqnarray*}
This gives that
\begin{equation}\label{est-1}
\sup_{a\in \Bn}F_\mu(a)\leq C\|A_{\mu,s}\|^s_{A_\alpha^p\rightarrow L^q (\Sn)}.
\end{equation}

We next look for the sufficiency. For $f\in A^p_\alpha$, \eqref{setin} implies
  \begin{equation}\label{est-3}
  |A_{\mu,s}f(\zeta)|^s\leq C\int_{\widetilde{\Gamma}(\zeta)}|f(z)|^s\rho_{\alpha}(\mu)(z,r)\frac{d V_\alpha(z)}{(1-|z|)^n}.
   \end{equation}
Let $\{a_k\}$
be a $r$-lattice in $\Bn$. Since $\frac{p}{s}\leq 1$, we have
\begin{eqnarray*}
 && |A_{\mu,s}f(\zeta)|^p\le     C\left(\sup_{z\in \Bn}F_\mu(z)\sum_{k:D_k\bigcap\widetilde{\Gamma}(\zeta)\neq\emptyset}(1-|a_k|)^{\frac{(n+1+\alpha)s}{p}-\frac{ns}{q}}\sup_{z\in D_k}|f(z)|^s\right)^{\frac{p}{s}}
 \\
 &&\verb#       #
   \leq C\left(\sup_{z\in \Bn}F_\mu(z)\right)^{\frac{p}{s}}\sum_{k:D_k\bigcap\widetilde{\Gamma}(\zeta)\neq\emptyset}(1-|a_k|)^{n+1+\alpha-\frac{np}{q}}\sup_{z\in D_k}|f(z)|^p
\\
 &&\verb#       #
   \leq C\left(\sup_{z\in \Bn}F_\mu(z)\right)^{\frac{p}{s}}\sum_{k:D_k\bigcap\widetilde{\Gamma}(\zeta)\neq\emptyset}\int_{ \widetilde{D}_k}(1-|w|)^{-\frac{np}{q}}|f(w)|^pdV_\alpha(w)
\\
 &&\verb#       #
   \leq CN\left(\sup_{z\in \Bn}F_\mu(z)\right)^{\frac{p}{s}}\int_{\widetilde{\widetilde{\Gamma}}(\zeta)}(1-|w|)^{-\frac{np}{q}}|f(w)|^pdV_\alpha(w).
   \end{eqnarray*}
If $q=p$, then \eqref{EqG} shows
\begin{eqnarray*}
 && \|A_{\mu,s} f\|^q_{L^q(\Sn)} \leq    C\left(\sup_{z\in \Bn}F_\mu(z)\right)^{\frac{p}{s}}\int_{\Sn}\left(\int_{\Gamma(\zeta)}|f(w)|^p\frac{dV_\alpha(w)}{(1-|w|)^{n}}\right)d\sigma(\zeta)
 \\&&\verb#      # \leq C\left(\sup_{z\in \Bn}F_\mu(z)\right)^{\frac{p}{s}}\|f\|_{A_\alpha^p}^p.
   \end{eqnarray*}
If $q>p$, then $\frac{q}{p}>1$. By H\"{o}lder's inequality, we obtain
\begin{eqnarray*}
 && |A_{\mu,s} f(\zeta)|^q\leq    C\left(\sup_{z\in \Bn}F_\mu(z)\right)^{\frac{q}{s}}\left(\int_{\widetilde{\widetilde{\Gamma}}(\zeta)}(1-|w|)^{-\frac{np}{q}}|f(w)|^pdV_\alpha(w)\right)^{\frac{q}{p}}
 \\
 &&\verb#       #
   \leq C
   \left(\sup_{z\in \Bn}F_\mu(z)\right)^{\frac{q}{s}}\int_{\widetilde{\widetilde{\Gamma}}(\zeta)}|f(w)|^p\frac{dV_\alpha(w)}{(1-|w|)^{n}}\left(\int_{\widetilde{\widetilde{\Gamma}}(\zeta)}|f(w)|^pdV_\alpha(w)\right)^{\frac{q-p}{p}}.
  \\
 &&\verb#       #
  \leq  C\left(\sup_{z\in \Bn}F_\mu(z)\right)^{\frac{q}{s}}\|f\|_{A_\alpha^p}^{q-p}\int_{\widetilde{\widetilde{\Gamma}}(\zeta)}|f(w)|^p\frac{dV_\alpha(w)}{(1-|w|)^{n}}.
   \end{eqnarray*}
This, together with  \eqref{EqG}, implies
\[
 \begin{split}
 \|A_{\mu,s} f\|^q_{L^q(\Sn)}&\leq C\left(\sup_{z\in \Bn}F_\mu(z)\right)^{\frac{q}{s}}\|f\|_{A_\alpha^p}^{q-p} \int_{\Sn}\left(\int_{\Gamma(\zeta)}|f(w)|^p\frac{dV_\alpha(w)}{(1-|w|)^{n}}\right)d\sigma(\zeta)
 \\
  &\leq C\left(\sup_{z\in \Bn}F_\mu(z)\right)^{\frac{q}{s}}\|f\|_{A_\alpha^p}^{q}.
\end{split}
\]
  Therefore, $A_{\mu,s}$ is bounded from $A_\alpha^p$ to $L^q (\Sn)$, and
\[
\|A_{\mu,s}\|^s_{A_\alpha^p\rightarrow L^q (\Sn)}\leq C\sup_{a\in \Bn}F_\mu(z).
\]
 This, combined with  \eqref{est-1},  tells us that
\[
\|A_{\mu,s}\|^s_{A_\alpha^p\rightarrow L^q (\Sn)}\asymp\sup_{a\in \Bn}F_\mu(z).
\]
\end{proof}
Before proving part (b) in Theorem \ref{th-1}, we need a preliminary lemma.

\begin{lemma}\label{PLem}
Let $\alpha>-1$, $0< s<p\leq q<\infty$, and $\tau>0$. Let $\mu$ be a positive Borel measure on $\Bn$. The following conditions are equivalent:
\begin{itemize}
\item[(a)] The measure $d\nu(z):=\rho_{\alpha,\mu}(z,r)^{\frac{p}{p-s}}dV_\alpha(z)
$
 is a $\frac{p\tau}{(p-s)}$-Carleson measure.

\item[(b)] For any $f\in A^p_{\alpha}$ and $\beta>0$, we have
\[
\sup_{a\in \Bn}\int_{\Bn}\!\!\frac{(1-|a|^2)^{\beta}}{|1-\langle a,z \rangle |^{n\tau+\beta}} \,|f(z)|^s\,d\mu(z)\lesssim \|f\|_{A^p_{\alpha}}^s.
\]
\end{itemize}
Moreover, we have
\[
\big \| \nu \big \|_{CM_{\frac{p\tau}{(p-s)}}} \asymp \sup _{\|f\|_{A^p_{\alpha} \asymp 1}} \Big \| |f|^s d\mu \Big \|^{p/(p-s)}_{CM_ {\tau}}.
\]
\end{lemma}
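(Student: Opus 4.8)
The plan is to first observe that, by the equivalent formulation \eqref{sCM} of the $\tau$-Carleson condition, statement (b) is precisely the assertion that $|f|^s\,d\mu$ is a $\tau$-Carleson measure with $\big\||f|^s d\mu\big\|_{CM_\tau}\lesssim \|f\|_{A^p_\alpha}^s$, uniformly over $f\in A^p_\alpha$. Thus both the equivalence (a)$\Leftrightarrow$(b) and the quantitative comparison reduce to the two-sided estimate $\|\nu\|_{CM_{p\tau/(p-s)}}\asymp M^{p/(p-s)}$, where $M:=\sup_{\|f\|_{A^p_\alpha}\le 1}\big\||f|^s d\mu\big\|_{CM_\tau}$. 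Throughout I would fix an $r$-lattice $\{a_k\}$ and pass freely between $\nu(B_{\delta}(\zeta))$ and its discretization $\sum \rho_{\alpha,\mu}(a_k,r)^{p/(p-s)}V_\alpha(D_k)$, using that $\rho_{\alpha,\mu}(\cdot,r)$ is comparable on Bergman balls, the bounded overlap property (iii), and the geometric fact that $D_k$ meeting $B_{\delta}(\zeta)$ forces $D_k\subset B_{C\delta}(\zeta)$ (since points of $B_{\delta}(\zeta)$ satisfy $1-|z|<\delta$).

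For (a)$\Rightarrow$(b), i.e. $M^{p/(p-s)}\lesssim \|\nu\|_{CM_{p\tau/(p-s)}}$, I would fix $\zeta,\delta$ and a function $f$, cover $B_{\delta}(\zeta)$ by the $D_k$, and use the subharmonic bound $\sup_{D_k}|f|^s\lesssim V_\alpha(\widetilde{D}_k)^{-1}\int_{\widetilde{D}_k}|f|^s dV_\alpha$ together with $\mu(D_k)\asymp \rho_{\alpha,\mu}(a_k,r)V_\alpha(D_k)$ to estimate $\int_{B_{\delta}(\zeta)}|f|^s d\mu\lesssim \sum_{k}\rho_{\alpha,\mu}(a_k,r)\int_{\widetilde{D}_k}|f|^s dV_\alpha$, summed over those $k$ with $D_k\subset B_{C\delta}(\zeta)$. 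Applying H\"older with exponents $p/s$ and its conjugate $p/(p-s)$, splitting off the weight $V_\alpha(\widetilde{D}_k)$, the $\rho$-factor yields $\big(\sum \rho_{\alpha,\mu}(a_k,r)^{p/(p-s)}V_\alpha(D_k)\big)^{(p-s)/p}\lesssim \nu(B_{C\delta}(\zeta))^{(p-s)/p}\lesssim \|\nu\|_{CM_{p\tau/(p-s)}}^{(p-s)/p}\,\delta^{n\tau}$, while Jensen inside each ball, $\int_{\widetilde{D}_k}|f|^s dV_\alpha\le V_\alpha(\widetilde{D}_k)^{1-s/p}\big(\int_{\widetilde{D}_k}|f|^p dV_\alpha\big)^{s/p}$, collapses the remaining factor to $\big(\sum_k\int_{\widetilde{D}_k}|f|^p dV_\alpha\big)^{s/p}\lesssim \|f\|_{A^p_\alpha}^s$ by bounded overlap.

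The substantial direction is (b)$\Rightarrow$(a), i.e. $\|\nu\|_{CM_{p\tau/(p-s)}}\lesssim M^{p/(p-s)}$, where the randomized test functions enter. I would fix $\zeta,\delta$ and an arbitrary $\lambda=\{\lambda_k\}\in\ell^p$ supported on $J:=\{k:D_k\subset B_{\delta}(\zeta)\}$, and for $t\in(0,1)$ set $f_t(z)=\sum_k \lambda_k r_k(t)(1-|a_k|^2)^m(1-\langle z,a_k\rangle)^{-m-(n+1+\alpha)/p}$ with $m$ large; by Theorem \ref{atomic}, $\|f_t\|_{A^p_\alpha}\lesssim\|\lambda\|_{\ell^p}$ uniformly in $t$. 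Applying (b) to each $f_t$, integrating in $t$, and using Fubini together with Khinchine's inequality \eqref{khin} to produce the pointwise lower bound $\int_0^1|f_t(z)|^s\,dt\gtrsim |\lambda_j|^s(1-|a_j|^2)^{-s(n+1+\alpha)/p}$ for $z\in D_j$, I arrive at $\sum_{j\in J}|\lambda_j|^s\,c_j\lesssim M\,\delta^{n\tau}\|\lambda\|_{\ell^p}^s$, where $c_j:=\rho_{\alpha,\mu}(a_j,r)(1-|a_j|^2)^{(n+1+\alpha)(p-s)/p}$. Writing $b_j=|\lambda_j|^s$ so that $\|\lambda\|_{\ell^p}^s=\|b\|_{\ell^{p/s}}$, this reads $\sum_{j\in J}b_j c_j\lesssim M\delta^{n\tau}\|b\|_{\ell^{p/s}(J)}$ for all $b\ge0$; duality between $\ell^{p/s}$ and $\ell^{p/(p-s)}$ then gives $\big(\sum_{j\in J}c_j^{p/(p-s)}\big)^{(p-s)/p}\lesssim M\delta^{n\tau}$. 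Since $c_j^{p/(p-s)}\asymp \rho_{\alpha,\mu}(a_j,r)^{p/(p-s)}V_\alpha(D_j)$, the left side is comparable to $\nu(B_{\delta}(\zeta))^{(p-s)/p}$, yielding $\nu(B_{\delta}(\zeta))\lesssim M^{p/(p-s)}\delta^{n\tau p/(p-s)}$.

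The main obstacle I anticipate is the (b)$\Rightarrow$(a) direction: the careful combination of randomization with Khinchine's inequality needed to extract a clean pointwise lower bound for $\int_0^1|f_t|^s\,dt$ on each $D_j$ (discarding the off-diagonal kernel contributions), and the geometric bookkeeping relating the non-isotropic balls $B_{\delta}(\zeta)$ to the Bergman lattice so that the summation over $J$ faithfully reproduces $\nu(B_{\delta}(\zeta))$. The H\"older and duality manipulations with the conjugate exponents $p/s$ and $p/(p-s)$ are routine once these two points are secured.
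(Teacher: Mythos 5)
Your proof is correct in outline, but it takes a genuinely different route from the paper. The paper's argument is a short reduction: for each $a\in\Bn$ it introduces the tilted measure $d\mu_a(z)=\frac{(1-|a|^2)^{\beta}}{|1-\langle a,z\rangle|^{n\tau+\beta}}d\mu(z)$, observes that (b) is exactly the uniform boundedness of the embeddings $A^p_\alpha\hookrightarrow L^s(\mu_a)$, invokes the known characterization of such embeddings for $s<p$ (Theorem 54 of \cite{ZZ}, Luecking's theorem for Bergman spaces) to convert this into $\sup_a\|\rho_{\alpha,\mu_a}(\cdot,r)\|_{L^{p/(p-s)}(dV_\alpha)}<\infty$, and then recognizes $\sup_a\|\rho_{\alpha,\mu_a}(\cdot,r)\|^{p/(p-s)}_{L^{p/(p-s)}(dV_\alpha)}$ as precisely the integral form \eqref{sCM} of the $\frac{p\tau}{p-s}$-Carleson condition for $\nu$. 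You instead re-derive the needed embedding statement from scratch, localized to Carleson boxes: lattice covering, subharmonicity and H\"older with exponents $p/s$, $p/(p-s)$ for (a)$\Rightarrow$(b), and Rademacher test functions via Theorem \ref{atomic}, Khinchine, and $\ell^{p/s}$--$\ell^{p/(p-s)}$ duality for (b)$\Rightarrow$(a) -- in effect reproving the relevant case of Luecking's theorem. Both approaches are valid and yield the two-sided norm estimate; the paper's is much shorter but uses Theorem 54 of \cite{ZZ} as a black box, while yours is self-contained. Two small points you should tidy in a full write-up: (i) $\rho_{\alpha,\mu}(\cdot,r)$ is not two-sidedly comparable on Bergman balls at a \emph{fixed} radius, so each comparison costs a radius enlargement $r\to 2r$ (harmless, since the conditions are radius-independent, but it should be tracked as the paper does in Lemma \ref{discrete-1}); and (ii) the balls $D(a_j,r/4)$ do not cover $\Bn$, so when passing from $\sum_{j\in J}c_j^{p/(p-s)}$ back to $\nu(B_{c\delta}(\zeta))$ you should cover $B_{c\delta}(\zeta)$ by the full balls $D_j$ and absorb the bounded overlap, rather than rely on the disjoint quarter-balls. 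Also note that your worry about off-diagonal terms in the Khinchine step is unfounded: since all terms in the square function are nonnegative, dropping everything but the diagonal term immediately gives the desired lower bound.
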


\begin{proof}
For $a\in \Bn$, consider the measure $\mu_ a$ defined as
\[
d\mu_ a(z)=\frac{(1-|a|^2)^{\beta}}{|1-\langle a,z \rangle |^{n\tau+\beta}} \,d\mu(z)
\]
With this notation, condition (b) corresponds to the inequality
\[
\sup_{a\in \Bn}\int_{\Bn}\!|f(z)|^s\,d\mu_ a (z)\lesssim \|f\|_{A^p_{\alpha}}^s,
\]
and, since $s<p$,  by Theorem 54 in \cite{ZZ}, this is equivalent to the function $\rho_{\alpha,\mu_ a}(z,r)$ being in  $L^{\frac{p}{p-s}}(\Bn,dV_{\alpha})$, with
\[
 \Big \| \rho_{\alpha,\mu_ a}(\cdot ,r) \Big \|_{L^{\frac{p}{p-s}}(\Bn,dV_{\alpha})} \asymp \sup _{\|f\|_{A^p_{\alpha} \asymp 1}}\int_{\Bn}\!|f(z)|^s\,d\mu_ a (z),
 \]
 that gives
\[
\sup_{a\in \Bn}\Big \| \rho_{\alpha,\mu_ a}(\cdot ,r) \Big \|_{L^{\frac{p}{p-s}}(\Bn,dV_{\alpha})} \asymp \sup _{\|f\|_{A^p_{\alpha} \asymp 1}} \Big \| |f|^s d\mu \Big \|_{CM_ {\tau}}.
\]
On the other hand,
\[
\mu_ a(D(z,r))\asymp \frac{(1-|a|^2)^{\beta}}{|1-\langle a,z \rangle |^{n\tau+\beta}} \,\mu(D(z,r)),
\]
so that, by  \eqref{sCM},
\[
\sup_{a\in \Bn} \Big \| \rho_{\alpha,\mu_ a}(\cdot ,r) \Big \|^{p/(p-s)}_{L^{\frac{p}{p-s}}(\Bn,dV_{\alpha})} \asymp \sup_{a\in \Bn}\int_{\Bn} \frac{(1-|a|^2)^{\frac{\beta p}{p-s}}}{|1-\langle a,z |^{n\,\frac{p\tau}{p-s} + \frac{\beta p}{p-s}}}\,d\nu(z) \asymp \big \| \nu \big \|_{CM_{\frac{p\tau}{(p-s)}}}.
\]
This finishes the proof.
\end{proof}

The next result is part (b) in Theorem \ref{th-1} with the corresponding estimates for the norm of $A_{\mu,s}$.

\begin{theorem}\label{th-02}
Let $\alpha>-1$, $0< s<p\leq q<\infty$, and let $\mu$ be a positive Borel measure on $\Bn$. Then  $A_{\mu,s}: A_\alpha^p\rightarrow L^q (\Sn)$ is bounded if and only if
\[
d\nu(z):=\rho_{\alpha,\mu}(z,r)^{\frac{p}{p-s}}dV_\alpha(z)
\]
 is a $\frac{p(q-s)}{q(p-s)}$-Carleson measure. Moreover, we have
\[
\big \|A_{\mu,s} \big \|_{A^p_{\alpha} \rightarrow L^q(\Sn)}\asymp \big \| \nu \big \| ^{\frac{p-s}{ps}}_{CM_\frac{p(q-s)}{q(p-s)}}.
\]
\end{theorem}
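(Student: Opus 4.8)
The plan is to reduce everything to the preliminary Lemma \ref{PLem} by reinterpreting the $L^q$-norm of the area operator as a Carleson-measure norm of the measure $|f|^s\,d\mu$. The starting observation is elementary: for fixed $f\in A^p_\alpha$, set $d\lambda_f(z)=|f(z)|^s\,d\mu(z)$. Since $(1-|z|)\asymp(1-|z|^2)$ on $\Bn$,
$$
A_{\mu,s}f(\zeta)^s=\int_{\Gamma(\zeta)}|f(z)|^s\frac{d\mu(z)}{(1-|z|)^n}\asymp\widetilde{\lambda_f}(\zeta),
$$
so that $\|A_{\mu,s}f\|_{L^q(\Sn)}^s\asymp\|\widetilde{\lambda_f}\|_{L^{q/s}(\Sn)}$. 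As $q\ge p>s$, the exponent $t:=q/s$ satisfies $t>1$, and the whole question is transferred onto the boundary function $\widetilde{\lambda_f}$.

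The key step I would isolate is the equivalence, valid for every positive Borel measure $\eta$ on $\Bn$ and every $t>1$,
$$
\|\widetilde{\eta}\|_{L^t(\Sn)}\asymp\|\eta\|_{CM_{(t-1)/t}}.
$$
Applying this with $\eta=\lambda_f$ and $t=q/s$ (so $(t-1)/t=(q-s)/q$) gives $\|A_{\mu,s}f\|_{L^q(\Sn)}^s\asymp\big\||f|^s\,d\mu\big\|_{CM_{(q-s)/q}}$; taking the supremum over $\|f\|_{A^p_\alpha}\le1$ shows that $A_{\mu,s}\colon A^p_\alpha\to L^q(\Sn)$ is bounded, with $\|A_{\mu,s}\|^s\asymp\sup_{\|f\|_{A^p_\alpha}\le1}\||f|^s d\mu\|_{CM_{(q-s)/q}}$, precisely when this supremum is finite. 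I then invoke Lemma \ref{PLem} with $\tau=(q-s)/q$: its condition (b) is exactly the uniform bound $\||f|^s d\mu\|_{CM_\tau}\lesssim\|f\|_{A^p_\alpha}^s$, equivalent to $\nu=\rho_{\alpha,\mu}(\cdot,r)^{p/(p-s)}dV_\alpha$ being a $\tfrac{p\tau}{p-s}=\tfrac{p(q-s)}{q(p-s)}$-Carleson measure. The quantitative half of Lemma \ref{PLem} reads $\|\nu\|^{(p-s)/p}_{CM}\asymp\sup_{\|f\|\asymp1}\||f|^s d\mu\|_{CM_\tau}\asymp\|A_{\mu,s}\|^s$, which rearranges to the asserted $\|A_{\mu,s}\|\asymp\|\nu\|^{(p-s)/(ps)}_{CM}$.

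The one genuine obstacle I anticipate is the upper bound in the key equivalence, $\|\widetilde\eta\|_{L^t(\Sn)}\lesssim\|\eta\|_{CM_{(t-1)/t}}$; the reverse inequality is the easy direction obtained by testing the Carleson condition on single non-isotropic balls. The tempting shortcut—dropping the cone in $\widetilde\eta$, applying Minkowski's inequality and the Forelli–Rudin estimate of Lemma \ref{IctBn}—only yields the crude bound $\int_{\Bn}(1-|z|^2)^{-n(t-1)/t}\,d\eta$, which may diverge even for Carleson $\eta$ (take $\eta=dV$ with $(t-1)/t>1/n$), so the non-tangential restriction must be used essentially. The route I would take is to discretize over an $r$-lattice $\{a_k\}$ exactly as in Lemma \ref{discrete-1}, giving $\widetilde\eta(\zeta)\asymp\sum_{a_k\in\widetilde\Gamma(\zeta)}\eta(D_k)(1-|a_k|^2)^{-n}$ and hence $\|\widetilde\eta\|^t_{L^t}\asymp\|b\|^t_{T^t_1(Z)}$ with $b_k=\eta(D_k)(1-|a_k|^2)^{-n}$, while the hypothesis becomes $\sum_{a_k\in B_\delta(\zeta)}\eta(D_k)\lesssim\delta^{n(t-1)/t}\|\eta\|_{CM}$. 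The required tent-sequence bound $\|b\|_{T^t_1}\lesssim\|\eta\|_{CM}$ should then follow from the duality of Theorem \ref{TTD1} (the $0<q\le1$ case identifies $(T^t_1)^*\cong T^{t'}_\infty$): pairing $\{b_k\}$ against $\{d_k\}$ with $\|d\|_{T^{t'}_\infty}\le1$, a layer-cake decomposition of the level sets $\{\sup_{a_k\in\Gamma(\zeta)}d_k>\lambda\}$ into Whitney balls reduces the estimate to summing the Carleson condition over those balls. The delicate point, which is exactly where care is needed, is that the exponent $(t-1)/t<1$ makes the Carleson bound super-additive over the Whitney components, so controlling $\sum_i\sigma(B_i)^{(t-1)/t}$ against $\|d\|_{T^{t'}_\infty}$ requires exploiting the maximal-function structure of $T^{t'}_\infty$ rather than a naive summation.
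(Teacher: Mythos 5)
Your necessity argument is sound and is essentially the paper's: the ``easy'' direction $\|\eta\|_{CM_{(t-1)/t}}\lesssim\|\widetilde\eta\|_{L^t(\Sn)}$ applied to $\eta=|f|^s\,d\mu$ is exactly what the paper extracts from \eqref{EqT1b} by testing against the functions $g_a$, and the passage to the condition on $\nu$ via Lemma \ref{PLem} is the same.

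The sufficiency direction, however, rests on a false statement. The ``key equivalence'' $\|\widetilde\eta\|_{L^t(\Sn)}\asymp\|\eta\|_{CM_{(t-1)/t}}$ for arbitrary positive measures $\eta$ and $t>1$ fails in the direction you need. By Theorem \ref{LT} (with exponents $p=t'=t/(t-1)$ and $s=1$), the quantity $\|\widetilde\eta\|_{L^t(\Sn)}$ is comparable to the norm of the embedding $H^{t'}\to L^1(\eta)$; since $t'>1$ this is a ``$p>q$'' embedding, which is characterized by $\widetilde\eta\in L^t$ and is \emph{strictly stronger} than the $\frac{t-1}{t}$-Carleson condition. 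Concretely, in the disk take $t=2$ and $\eta=\sum_k 2^{-k/2}\delta_{1-2^{-k}}$: then $\eta(B_\delta(\zeta))\lesssim\delta^{1/2}$, so $\|\eta\|_{CM_{1/2}}\asymp 1$, while for $|\arg\zeta|\asymp 2^{-m}$ one has $\widetilde\eta(\zeta)\asymp\sum_{k\le m}2^{-k/2}\cdot 2^{k}\asymp 2^{m/2}$, whence $\|\widetilde\eta\|^2_{L^2(\T)}\asymp\sum_m 2^{-m}\cdot 2^{m}=\infty$. (Masses $2^{-kn/t'}$ at $(1-2^{-k})e_1$ give the analogous example in $\Bn$ for every $t>1$.) Consequently the bound $\|\,|f|^s d\mu\,\|_{CM_{(q-s)/q}}\lesssim\|f\|^s_{A^p_\alpha}$, which is all that Lemma \ref{PLem} delivers, does not by itself control $\|\widetilde{\lambda_f}\|_{L^{q/s}}$, and the tent-space repair you sketch (pairing $T^t_1$ against $T^{t'}_\infty$ and summing the Carleson condition over Whitney pieces) cannot close: the ``super-additivity'' you flag is not a technical nuisance but precisely where the inequality breaks.

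Any correct proof of sufficiency must therefore use more than the Carleson property of the single measure $\lambda_f$. The paper keeps the bilinear form from \eqref{EqT1b}, namely $\int_{\Bn}|g|\,|f|^s\,d\mu$ with $g$ in the unit ball of $H^{q/(q-s)}$, and estimates it from the hypothesis on $\nu$; note that even there the relevant embedding $H^{q/(q-s)}\to L^1(\nu_f)$ has source exponent $q/(q-s)>1$, i.e.\ it lies outside the range $p\le q$ in which the Carleson--H\"ormander--Duren theorem is stated, so the holomorphy of $f$ and the uniformity of the Carleson bounds over the whole unit ball of $A^p_\alpha$ must genuinely enter the estimate. Your reduction to a statement about one abstract Carleson measure is exactly the simplification that has to be avoided.
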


\begin{proof}  Suppose $A_{\mu,s}:A_\alpha^p\rightarrow L^q (\Sn)$ is bounded. Then $A_{\mu,s} (f)$ belongs to $L^q (\Sn)$ for any $f\in A_\alpha^p$, and therefore  $\left(A_{\mu,s} (f)\right)^s\in  L^{q/s} (\Sn)$. Consider the measure $\mu_{f,s}$ defined by $d\mu_{f,s}(z)=|f(z)|^s\,d\mu(z)$. Then $\big \|(A_{\mu,s} (f)^s\big \| _{L^{q/s}(\Sn)}=\big \|\widetilde{\mu}_{f,s}\big \| _{L^{q/s}(\Sn)}$, and by Theorem \ref{LT} we have
\begin{equation}\label{EqT1b}
\Big \| A_{\mu,s} (f) \Big \|^s_{L^{q}(\Sn)}=\Big \|\left( A_{\mu,s} (f)\right)^s \Big \|_{L^{q/s}(\Sn)} \asymp \sup_{\|g\|_{H^{\frac{q}{q-s}}}\asymp 1} \int_{\Bn} |g(z)|\,|f(z)|^s d\mu(z).
\end{equation}
Fix a point $a\in \Bn$ and $\beta>0$. Consider the holomorphic function $g_ a$ defined as
\[
g_ a(z)=\frac{(1-|a|^2)^{\beta}}{(1-\langle z,a\rangle )^{n\frac{(q-s)}{q}+\beta}},\qquad z\in \Bn.
\]
Then, by Lemma \ref{IctBn}, $g_ a \in H^{\frac{q}{q-s}}$ with $\|g_ a\|_{H^{\frac{q}{q-s}}}\asymp 1$. Hence, we have
\[
\int_{\Bn}\!\!\frac{(1-|a|^2)^{\beta}}{|1-\langle a,z \rangle |^{n\frac{(q-s)}{q}+\beta}} \,|f(z)|^s\,d\mu(z)=\int_{\Bn} |g_ a(z)|\,|f(z)|^s d\mu(z)\lesssim \,\Big \| A_{\mu,s} (f) \Big \|^s_{L^{q}(\Sn)}.
\]
Because of \eqref{sCM}, the measure $d\nu_ f(z)=|f(z)|^s\,d\mu(z)$ is a $\frac{q-s}{q}$-Carleson measure for any $f\in A^p_{\alpha}$, and moreover $\|\nu_ f\|_{CM_{\frac{q-s}{q}}}\lesssim \| A_{\mu,s} (f) \Big \|^s_{L^{q}(\Sn)}$. By Lemma \ref{PLem} it follows that $\nu$ is a $\frac{p(q-s)}{q(p-s)}$-Carleson measure with
\[
\big \| \nu \big \| ^{\frac{p-s}{ps}}_{CM_\frac{p(q-s)}{q(p-s)}}\asymp \sup _{\|f\|_{A^p_{\alpha} \asymp 1}} \Big \| \nu_ f \Big \|^{1/s}_{CM_ {\frac{q-s}{q}}} \lesssim \sup _{\|f\|_{A^p_{\alpha} \asymp 1}}\| A_{\mu,s} (f) \Big \|_{L^{q}(\Sn)} \asymp \big \|A_{\mu,s} \big \|_{A^p_{\alpha} \rightarrow L^q(\Sn)}.
\]

Conversely, if $\nu$ is an $\frac{p(q-s)}{q(p-s)}$-Carleson measure, by Lemma \ref{PLem}, the measure $\nu_ f$ is a $\frac{q-s}{q}$-Carleson measure for any $f\in A^p_{\alpha}$ with
\[
\sup _{\|f\|_{A^p_{\alpha} \asymp 1}} \Big \| \nu_ f \Big \|^{1/s}_{CM_ {\frac{q-s}{q}}} \asymp \big \| \nu \big \| ^{\frac{p-s}{ps}}_{CM_\frac{p(q-s)}{q(p-s)}}.
\]
By the Carleson-H\"{o}rmander-Duren's theorem, we have
\[
\int_{\Bn} |g(z)|\,|f(z)|^s d\mu(z) \lesssim \|g\|_{H^{\frac{q}{q-s}}} \Big \| \nu_ f \Big \|_{CM_ {\frac{q-s}{q}}}
\]
Therefore, by \eqref{EqT1b}, we see that $A_{\mu,s}:A^p_{\alpha}\rightarrow L^q(\Sn)$ is bounded with
\[
\big \|A_{\mu,s} \big \|_{A^p_{\alpha} \rightarrow L^q(\Sn)}=\sup_{\|f\|_{A^p_{\alpha}=1}} \Big \| A_{\mu,s} (f) \Big \|_{L^{q}(\Sn)} \lesssim \sup_{\|f\|_{A^p_{\alpha}=1}}\Big \| \nu_ f \Big \|^{1/s}_{CM_ {\frac{q-s}{q}}},
\]
and the result follows.
\end{proof}

It remains to prove part (c) and (d) in Theorem \ref{th-1}. We start with the case $p>\max(s,q)$.
\begin{theorem}\label{th-03}
Let $\alpha>-1$,  $0< \max\{s,q\}<p<\infty$, and let $\mu$ be a positive Borel measure on $\Bn$.  Then $A_{\mu,s}: A_\alpha^p\rightarrow L^q (\Sn)$ is bounded if and only if
$$
 G_{\mu}(\zeta):=\int_{\Gamma(\zeta)}\rho_{\alpha,\mu}(z,r)^{\frac{p}{p-s}}\frac{dV_\alpha(z)}{(1-|z|)^n}
$$
belongs to $L^{\frac{q(p-s)}{s(p-q)}}(\Sn)$. Moreover, we have
\[
\big \|A_{\mu,s} \big \|_{A^p_{\alpha} \rightarrow L^q(\Sn)} \asymp \big \|G_{\mu} \big \|^{\frac{p-s}{sp}}_{L^{\frac{q(p-s)}{s(p-q)}}(\Sn)}.
\]
\end{theorem}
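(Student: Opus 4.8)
The plan is to convert the whole statement into a membership problem for a single tent sequence space, after which sufficiency is read off from the factorization Theorem~\ref{TTD2} and necessity from its multiplier reformulation. Fix an $r$-lattice $Z=\{a_k\}$, write $D_k=D(a_k,r)$, and set
$$
q_2=\frac{p}{p-s},\qquad p_2=\frac{pq}{s(p-q)},\qquad u=\frac{p_2}{q_2}=\frac{q(p-s)}{s(p-q)},
$$
together with $d_k=\rho_{\alpha,\mu}(a_k,2r)\,(1-|a_k|)^{(1+\alpha)(p-s)/p}$. Since $d_k^{q_2}\asymp\rho_{\alpha,\mu}(a_k,2r)^{p/(p-s)}(1-|a_k|)^{1+\alpha}$, a discretization in the style of Lemma~\ref{discrete-1} gives $\sum_{a_k\in\Gamma(\zeta)}d_k^{q_2}\asymp G_\mu(\zeta)$ up to a harmless change of aperture and of the radius in $\rho_{\alpha,\mu}$, so that $\|\{d_k\}\|_{T^{p_2}_{q_2}(Z)}^{p_2}\asymp\|G_\mu\|_{L^u(\Sn)}^u$. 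Thus the theorem is equivalent to the assertion $\{d_k\}\in T^{p_2}_{q_2}(Z)$, with comparable quasinorms, and the whole argument will be carried out at the level of sequences. The identities $\frac{s}{q}=\frac{s}{p}+\frac{1}{p_2}$ and $1=\frac{s}{p}+\frac{1}{q_2}$ hold, which is exactly what is needed to split $T^{q/s}_1(Z)$ as $T^{p/s}_{p/s}(Z)\cdot T^{p_2}_{q_2}(Z)$.

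For sufficiency I would begin, as in the proof of Theorem~\ref{th-01} and using \eqref{setin}, from the pointwise estimate $|A_{\mu,s}f(\zeta)|^s\lesssim\sum_{a_k\in\widetilde\Gamma(\zeta)}c_k\,b_k$, where $c_k=\rho_{\alpha,\mu}(a_k,2r)(1-|a_k|)^{1+\alpha}$ and $b_k=\sup_{z\in D_k}|f(z)|^s$; raising to the power $q/s$ and integrating over $\Sn$ gives $\|A_{\mu,s}f\|^{q/s}_{L^q(\Sn)}\lesssim\|\{c_kb_k\}\|^{q/s}_{T^{q/s}_1(Z)}$. I then factor $c_kb_k=d_k\,\beta_k$ with $\beta_k=b_k(1-|a_k|)^{s(1+\alpha)/p}$; the sub-mean value property of $|f|^p$ combined with \eqref{EqG} shows $\|\{\beta_k\}\|_{T^{p/s}_{p/s}(Z)}\lesssim\|f\|^s_{A^p_\alpha}$. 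The factorization Theorem~\ref{TTD2}, with the index relations above, yields $\|\{c_kb_k\}\|_{T^{q/s}_1(Z)}\lesssim\|\{d_k\}\|_{T^{p_2}_{q_2}(Z)}\,\|\{\beta_k\}\|_{T^{p/s}_{p/s}(Z)}$, and chaining the three estimates gives precisely $\|A_{\mu,s}f\|_{L^q(\Sn)}\lesssim\|G_\mu\|^{(p-s)/(sp)}_{L^u(\Sn)}\,\|f\|_{A^p_\alpha}$. This part works uniformly, without separating the subcases $q<s$, $q=s$, $q>s$.

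For necessity I would test against randomized atoms. Given $\{\lambda_k\}\in\ell^p$ and $m$ large, put $f_t(z)=\sum_k\lambda_k r_k(t)(1-|a_k|^2)^m(1-\langle z,a_k\rangle)^{-m-(n+1+\alpha)/p}$; by Theorem~\ref{atomic} one has $\sup_{t}\|f_t\|_{A^p_\alpha}\lesssim\|\lambda\|_{\ell^p}$. Applying the assumed boundedness, raising to the power $q$, integrating in $t$ and using Fubini, I would apply Kahane's inequality \eqref{kah} in the quasi-Banach space $X=L^s\big(\Gamma(\zeta),d\mu/(1-|z|)^n\big)$ to pass from $\int_0^1\|f_t\|_X^q\,dt$ to $\big(\int_0^1\|f_t\|_X^s\,dt\big)^{q/s}$, and then Khinchine's inequality \eqref{khin} pointwise in $z$ to replace $\int_0^1|f_t(z)|^s\,dt$ by the square function $\big(\sum_k|\lambda_k|^2|K_k(z)|^2\big)^{s/2}$, where $K_k$ is the normalized kernel. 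Keeping only the diagonal term on each $D_j$, where $|K_j(z)|\asymp(1-|a_j|)^{-(n+1+\alpha)/p}$, and setting $\theta_j=|\lambda_j|^s(1-|a_j|)^{-ns/p}$, this produces the multiplier inequality
$$
\big\|\{d_j\theta_j\}\big\|_{T^{q/s}_1(Z)}\lesssim\|A_{\mu,s}\|\,\big\|\{\theta_j\}\big\|_{T^{p/s}_{p/s}(Z)},
$$
valid for every nonnegative sequence $\theta$, since $\|\{\theta_j\}\|_{T^{p/s}_{p/s}(Z)}\asymp\|\lambda\|_{\ell^p}^s$ and $\lambda\in\ell^p$ is arbitrary. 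The main obstacle is the closing step: deducing from this that $\{d_j\}\in T^{p_2}_{q_2}(Z)$ with $\|\{d_j\}\|_{T^{p_2}_{q_2}(Z)}\lesssim\|A_{\mu,s}\|$. This is exactly the multiplier reformulation of Theorem~\ref{TTD2} — a sequence multiplies $T^{p/s}_{p/s}(Z)$ into $T^{q/s}_1(Z)$ if and only if it lies in $T^{p_2}_{q_2}(Z)$ — and it is here that the possible values $q/s\le 1$ and $p_2\le 1$ rule out a routine duality argument and make the factorization result indispensable. Translating $\{d_j\}\in T^{p_2}_{q_2}(Z)$ back through the discretization of $G_\mu$ from the first paragraph then finishes the proof.
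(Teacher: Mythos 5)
Your sufficiency argument is sound and is essentially the paper's double application of H\"older's inequality rewritten in discrete form: the index bookkeeping $\frac{s}{q}=\frac{s}{p}+\frac{s(p-q)}{pq}$ and $1=\frac{s}{p}+\frac{p-s}{p}$ is correct, the bound $\|\{\beta_k\}\|_{T^{p/s}_{p/s}(Z)}\lesssim\|f\|^s_{A^p_\alpha}$ follows from the sub-mean value property and \eqref{EqG}, and only the ``product'' half of Theorem~\ref{TTD2} (which is just H\"older) is used, so nothing delicate is needed there. Your necessity argument also follows the paper's route up to and including the key inequality: randomized atoms, Kahane's inequality in $X=L^s(\Gamma(\zeta),d\mu/(1-|z|)^n)$, Khinchine pointwise, and retention of the diagonal terms yield exactly the paper's estimate \eqref{star}, which in your notation is the multiplier inequality $\|\{d_j\theta_j\}\|_{T^{q/s}_1(Z)}\lesssim\|A_{\mu,s}\|^s\,\|\{\theta_j\}\|_{T^{p/s}_{p/s}(Z)}$ for all nonnegative $\theta\in T^{p/s}_{p/s}(Z)$ (note the power $s$ on the operator norm, which you dropped).

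The gap is the closing step. You assert that ``a sequence multiplies $T^{p/s}_{p/s}(Z)$ into $T^{q/s}_1(Z)$ if and only if it lies in $T^{p_2}_{q_2}(Z)$'' is ``exactly the multiplier reformulation of Theorem~\ref{TTD2}.'' It is not: Theorem~\ref{TTD2} is a factorization statement, and only its easy direction gives the ``if'' part of the multiplier claim. The ``only if'' part --- which is what you need --- does not follow formally from factorization; the standard derivation runs through duality of $T^{p_2}_{q_2}(Z)$ followed by factorization of the predual, and, as you yourself observe, duality is unavailable here because $q/s$ and $p_2=\frac{pq}{s(p-q)}$ may be $\le 1$. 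Supplying this step is precisely the nontrivial content of the paper's proof: one replaces $\nu_k=d_k^{q/s}$ by $\nu_k^{1/t}$ with $t$ large so that $T^{\frac{pt}{p-q}}_{\frac{pst}{(p-s)q}}(Z)$ lies in the duality range of Theorem~\ref{TTD1}, identifies its predual, factors that predual via Theorem~\ref{TTD2} as $T^{t'}_{\frac{st}{st-q}}(Z)\cdot T^{\frac{pt}{q}}_{\frac{pt}{q}}(Z)$, writes a generic element as $\eta_k=\tau_k\lambda_k^{q/t}$, and estimates the pairing $\sum_k\eta_k\nu_k^{1/t}(1-|a_k|)^n$ by two applications of H\"older together with \eqref{star}. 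Without this (or an equivalent citation of a proved multiplier theorem valid for small exponents), your necessity proof stops one step short of the conclusion $\{d_j\}\in T^{p_2}_{q_2}(Z)$.
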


\begin{proof} We first discuss   the ``if'' part. Since $p> \max\{s,q\}$, we have $\frac{p}{s}>1$ and $\frac{p}{q}>1$. For
 $f\in A^p_\alpha$, \eqref{est-3} and H\"{o}lder's inequality give
 \begin{eqnarray*}
  |A_{\mu,s}(f)(\zeta)|^s\leq C\left(\int_{\widetilde{\Gamma}(\zeta)}|f(z)|^p\frac{d V_\alpha(z)}{(1-|z|)^n}\right)^{\frac{s}{p}}\left(\int_{\widetilde{\Gamma}(\zeta)}\rho_{\alpha}(\mu)(z,r)^{\frac{p}{p-s}}\frac{d V_\alpha(z)}{(1-|z|)^n}\right)^{\frac{p-s}{p}}.
   \end{eqnarray*}
Applying  H\"{o}lder's inequality to the inequality above, we obtain
\begin{eqnarray*}
&& \int_{\Sn} |A_{\mu,s}f(\zeta)|^qd\sigma(\zeta)\leq C\left[\int_{\Sn} \left(\int_{\widetilde{\Gamma}(\zeta)}|f(z)|^p\frac{d V_\alpha(z)}{(1-|z|)^n}\right)d\sigma(\zeta)\right]^{\frac{q}{p}}\\
&&\verb#       #
 \times\left[\int_{\Sn} \left(\int_{\widetilde{\Gamma}(\zeta)}\rho_{\alpha}(\mu)(z,r)^{\frac{p}{p-s}}\frac{d V_\alpha(z)}{(1-|z|)^n}\right)^{\frac{q(p-s)}{s(p-q)}}d\sigma(\zeta)\right]^{\frac{p-q}{p}}.
   \end{eqnarray*}
Thus, \eqref{EqG} yields
\begin{eqnarray*}
 \int_{\Sn} |A_{\mu,s}f(\zeta)|^qd\sigma(\zeta)\leq C\|f\|_{A^p_\alpha}^{q}\cdot
 \|G_\mu\|^{\frac{q(p-s)}{sp}}_{L^{
 \frac{q(p-s)}{s(p-q)}}(\Sn)}.
 \end{eqnarray*}
 That is, $A_{\mu,s}$ is bounded with
 \[
 \big \|A_{\mu,s}\big \|_{A_\alpha^p\to L^q(\Sn)}\lesssim \big \|G_\mu \big \|^{\frac{(p-s)}{sp}}_{L^{
 \frac{q(p-s)}{s(p-q)}}(\Sn)}.
\]

We next prove the ``only if'' part. Let $Z=\{a_k\}$ be an $r$-lattice with $r$  small enough. Consider the test function $F_ t$, defined for $z\in \Bn$ as
 \begin{equation}\label{textf-1}
F_t(z)=\sum_{k}(1-|a_k|)^{\frac{n}{p}}\lambda_k\gamma_k(t)f_k(z),
\end{equation}
where $\lambda=\{\lambda_k\}\in T^p_p(Z)$, with $\gamma_k:[0,1]\rightarrow\{-1, +1\}$ being the Rademacher functions, and
\[
f_k(z)=\frac{(1-|a_k|)^{b}}{(1-\langle z, a_k\rangle)^{b+\frac{n+1+\alpha}{p}}}
\]
with $b>n\max\{1, \frac{1}{p}\}-\frac{n}{p}$. Notice that, $\lambda=\{\lambda_k\}\in T^p_p(Z)$ if and only if $\left\{(1-|a_k|)^{\frac{n}{p}}\lambda_k\right\}\in \ell^p$. For $t\in [0,1]$, Theorem \ref{atomic} shows $F_t\in A^p_\alpha$ with
\[
\|F_t\|_{A^p_\alpha}\leq C\left\|\lambda\right\|_{ T^p_p(Z)}.
\]
It follows from the boundedness of $A_{\mu,s}: A_\alpha^p\to L^q(\Sn)$ that
\[
\begin{split}
\int_{\Sn} \!\left (\int_{\Gamma(\zeta)}\! \Big |\sum_{k}(1-|a_k|)^{\frac{n}{p}}\lambda_k\gamma_k(t)f_k(z),  \Big |^s \frac{d\mu(z)}{(1-|z|^2)^n} \right )^{q/s} \!\!\!d\sigma(\zeta)&=\int_{\Sn} |A_{\mu,s} F_ t(\zeta)|^q \,d\sigma(\zeta)
\\
&\lesssim \big \|A_{\mu,s}\big \|^q_{A_\alpha^p\to L^q(\Sn)}\cdot \left\|\lambda\right\|^q_{ T^p_p(Z)}.
\end{split}
\]
Integrating respect to $t$ in $(0,1)$ we get
\[
\int_{0}^{1} \int_{\Sn} \left\|\sum_{k}(1-|a_k|)^{\frac{n}{p}}\lambda_k\gamma_k(t)f_k\right\|_{\zeta, s}^q \!\!\!d\sigma(\zeta)\,dt\lesssim \big \|A_{\mu,s}\big \|^q_{A_\alpha^p\to L^q(\Sn)}\cdot \left\|\lambda\right\|^q_{ T^p_p(Z)},
\]
where, for $f\in H(\Bn)$, we use the notation
$$
\|f\|_{\zeta, s}=\left(\int_{\Gamma(\zeta)}|f(z)|^s\frac{d \mu(z)}{(1-|z|)^n}\right)^{\frac{1}{s}}.
$$
If $s\geq 1$, then $\|\cdot\|_{\zeta, s}$ is a norm; and $\|\cdot\|^s_{\zeta, s}$ is a quasinorm  on $H(\Bn)$ for $s<1$.
By Kahane's inequality, we have
\begin{eqnarray*}
 \left(\int_{0}^1 \left\|\sum_{k}(1-|a_k|)^{\frac{n}{p}}\lambda_k\gamma_k(t)f_k\right\|_{\zeta, s}^qdt\right)^{1/q} \asymp  \left(\int_{0}^1 \left\|\sum_{k}(1-|a_k|)^{\frac{n}{p}}\lambda_k\gamma_k(t)f_k\right\|_{\zeta, s}^s dt\right)^{1/s},
   \end{eqnarray*}
   and this give
\[
\int_{\Sn} \left(\int_{0}^1 \left\|\sum_{k}(1-|a_k|)^{\frac{n}{p}}\lambda_k\gamma_k(t)f_k\right\|_{\zeta, s}^s dt\right)^{q/s} \!\!\!d\sigma(\zeta)\,dt\lesssim \big \|A_{\mu,s}\big \|^q_{A_\alpha^p\to L^q(\Sn)}\cdot \left\|\lambda\right\|^q_{ T^p_p(Z)}.
\]
Applying  Khinchine's inequality, we obtain
\[
\begin{split}
\int_{\Sn}\!\!\left[\int_{\Gamma(\zeta)}\!\!\left(\sum_{k}(1-|a_k|)^{\frac{2n}{p}}|\lambda_k|^2|f_k(z)|^2\right)^{s/2}\!\!\!\!\frac{d\mu(z)}{(1-|z|)^n}\right]^{q/s}\!\!\!\!\!d\sigma(\zeta)
\lesssim \big \|A_{\mu,s}\big \|^q_{A_\alpha^p\to L^q(\Sn)}\cdot \left\|\lambda\right\|^q_{ T^p_p(Z)}.
\end{split}
\]
  On the other hand, there exist some $\tau>1$ such that $D(z, r)\subset \Gamma (\zeta)$ if $z\in  \Gamma _\tau(\zeta)$. Thus, after an application of Lemma 2.24 in \cite{ZhuBn},
\begin{eqnarray*}
&&\int_{\Gamma(\zeta)}\left(\sum_{k}(1-|a_k|)^{\frac{2n}{p}}|\lambda_k|^2|f_k(z)|^2\right)^{s/2}\frac{d\mu(z)}{(1-|z|)^n}
\\
&&\verb#   #\geq \sum_{a_j\in \Gamma_\tau(\zeta)}\int_{\Gamma(\zeta)\bigcap D_j}\left(\sum_{k}(1-|a_k|)^{\frac{2n}{p}}|\lambda_k|^2|f_k(z)|^2\right)^{s/2}\frac{d\mu(z)}{(1-|z|)^n}
\\
&&\verb#   #\geq \sum_{a_j\in \Gamma_\tau(\zeta)}(1-|a_j|)^{\frac{ns}{p}}|\lambda_j|^s\int_{D_j} |f_j(z)|^s\frac{d\mu(z)}{(1-|z|)^n}
\\
&&\verb#   #\gtrsim \sum_{a_j\in \Gamma_\tau(\zeta)}|\lambda_j|^s\rho_{\alpha,\mu}(a_j,r)\,(1-|a_j|)^{(1+\alpha)\frac{(p-s)}{p}},
   \end{eqnarray*}
where the last inequality is due to the fact that $|f_ j(z)| \asymp (1-|a_ j|)^{-\frac{(n+1+\alpha)}{p}}$ and $(1-|z|)\asymp (1-|a_ j|)$ for $z\in D_ j=D(a_ j,r)$. Therefore, we have
\begin{equation}\label{star}
\int_{\Sn}\!\!\left(\sum_{a_j\in \Gamma(\zeta)}\!\!|\lambda_j|^s\rho_{\alpha,\mu}(a_j,r)(1-|a_j|)^{\frac{(1+\alpha)(p-s)}{p}}  \right)^{q/s}\!\! \!\!\!d\sigma(\zeta)\lesssim \big \|A_{\mu,s}\big \|^q_{A_\alpha^p\to L^q(\Sn)}\cdot \left\|\lambda\right\|^q_{ T^p_p(Z)}.
   \end{equation}

 To prove $G_{\mu}\in L^{\frac{(p-s)q}{s(p-q)}}(\Sn)$, by Lemma \ref{discrete-1}, it is sufficient to show that
\begin{equation}\label{est01}
K_{\mu}(\zeta):= \left(\sum_{a_k\in \Gamma(\zeta)}\rho_{\alpha,\mu}(a_k,2r)^{\frac{p}{p-s}}(1-|a_k|)^{1+\alpha}\right)^{\frac{(p-s)q}{ps}}\in L^{\frac{p}{p-q}}(\Sn).
\end{equation}
Moreover, in this case, we will have
\[
\big \|G_{\mu} \big \|^{\frac{p-s}{sp}}_{L^{\frac{q(p-s)}{s(p-q)}}(\Sn)} \lesssim \big \|K_{\mu} \big \|^{1/q}_{L^{\frac{p}{p-q}}(\Sn)}.
\]
Write $\nu_k=\rho_{\alpha,\mu}(a_k,2r)^{q/s}(1-|a_k|)^{\frac{(1+\alpha)(p-s)q}{ps}}$. Then \eqref{est01} holds if and only if the sequence $\nu=\{\nu_k\}$ belongs to the tent sequence space  $T^{\frac{p}{p-q}}_{\frac{ps}{(p-s)q}}(Z)$. Moreover
$
\big \|K_{\mu} \big \|_{L^{\frac{p}{p-q}}(\Sn)}\asymp \|\nu\|_{T^{\frac{p}{p-q}}_{\frac{ps}{(p-s)q}}(Z)}.
$
  For $t>1$, this is equivalent to  $\nu^{1/t}:=\{\nu_k^{1/t}\}\in T^{\frac{pt}{p-q}}_{\frac{pst}{(p-s)q}}(Z)$ with
\[
  \|\nu^{1/t}\|_{T^{\frac{pt}{p-q}}_{\frac{pst}{(p-s)q}}(Z)}\asymp \|\nu\|^{1/t}_{T^{\frac{p}{p-q}}_{\frac{ps}{(p-s)q}}(Z)} \asymp \big \|K_{\mu} \big \|_{L^{\frac{p}{p-q}}(\Sn)}^{1/t}.
\]
For $t$ large enough, Theorem \ref{TTD1} and Theorem \ref{TTD2} imply
\[
T^{\frac{pt}{p-q}}_{\frac{pst}{(p-s)q}}(Z)=\left(T^{(\frac{pt}{p-q})^{'}}_{(\frac{pst}{(p-s)q})^{'}}(Z)\right)^{*}=\left(T^{t'}_{\frac{st}{st-q}}(Z)\cdot T^{\frac{pt}{q}}_{\frac{pt}{q}}(Z)\right)^{*}.
\]
Take any $\eta=\{\eta_k\}\in T^{(\frac{pt}{p-q})^{'}}_{(\frac{pst}{(p-s)q})^{'}}(Z)$, and factor it as suggested as
\[
\eta_k=\tau_k\lambda_k^{q/t},\quad \tau=\{\tau_k\}\in T^{t'}_{\frac{st}{st-q}}(Z),   \quad  \lambda=\{\lambda_k\}\in T^{p}_{p}(Z),
\]
with
\[
\|\tau \|_{T^{t'}_{\frac{st}{st-q}}(Z)}\cdot \|\lambda \|^{q/t}_{T^p_ p(Z)} \lesssim \|\eta \|_{T^{(\frac{pt}{p-q})^{'}}_{(\frac{pst}{(p-s)q})^{'}}(Z)}.
\]
Without loss of generality, we assume that all sequences are positive. Also, $t>1$ has been taken big enough so that also $st>q$. Using the factorization, the estimate \eqref{EqG} and H\"{o}lder's inequality twice, we get
\begin{eqnarray*}
&&\sum_{a_k\in \Gamma(\zeta)}\eta_k\,\nu_k^{1/t}\,(1-|a_k|)^{n}=\sum_{a_k\in \Gamma(\zeta)}\tau_k\,\lambda_k^{q/t}\,\nu_k^{1/t}\,(1-|a_k|)^{n}
\\
&&\verb#      #\asymp\int_{\Sn}\left(\sum_{a_k\in \Gamma(\zeta)}\tau_k \,\lambda_k^{q/t}\,\nu_k^{1/t}\right)d\sigma(\zeta)
\\
&&\verb#      #\leq \int_{\Sn}\left(\sum_{a_k\in \Gamma(\zeta)}\tau_k^{(st/q)'}\right)^{\frac{st-q}{st}}\left(\sum_{a_k\in \Gamma(\zeta)}\lambda_k^{s}\,\nu_k^{s/q}\right)^{\frac{q}{st}}d\sigma(\zeta)
\\
&&\verb#      #\leq \|\tau\|_{T^{t'}_{\frac{st}{st-q}}(Z)}\left[\int_{\Sn}\left(\sum_{a_k\in \Gamma(\zeta)}\lambda_k^{s}\nu_k^{s/q}\right)^{q/s}d\sigma(\zeta)\right]^{1/t}.
   \end{eqnarray*}
Therefore, by \eqref{star},
\[
\begin{split}
\sum_{a_k\in \Gamma(\zeta)}\eta_k\,\nu_k^{1/t}(1-|a_k|)^{n} &\lesssim \big \|A_{\mu,s} \big \|_{A^p_\alpha\to L^q(\Sn)}^{q/t} \cdot \|\tau\|_{T^{t'}_{\frac{st}{(st-q}}(Z)} \cdot \left\|\lambda\right\|^{q/t}_{ T^p_p(Z)}
\\
& \lesssim \big \|A_{\mu,s} \big \|_{A^p_\alpha\to L^q(\Sn)}^{q/t} \cdot \|\eta \|_{T^{(\frac{pt}{p-q})^{'}}_{(\frac{pst}{(p-s)q})^{'}}(Z)}.
 \end{split}
\]
By the duality of tent sequence spaces, we obtain that $\nu^{1/t}\in  T^{\frac{pt}{p-q}}_{\frac{pst}{(p-s)q}}(Z)$ with
\[
 \|\nu^{1/t}\|_{T^{\frac{pt}{p-q}}_{\frac{pst}{(p-s)q}}(Z)} \lesssim \big \|A_{\mu,s} \big \|_{A^p_\alpha\to L^q(\Sn)}^{q/t}.
\]
Hence, $G_{\mu}\in L^{\frac{(p-s)q}{s(p-q)}}(\Sn)$ with
\[
\big \|G_\mu \big \|^{\frac{p-s}{sp}}_{L^{\frac{(p-s)q}{s(p-q)}}(\Sn)} \lesssim \big \| K_{\mu} \big \|^{1/q}_{L^{\frac{p}{p-q}}(\Sn)} \asymp \Big \|\nu^{1/t}\Big \|^{t/q}_{T^{\frac{pt}{p-q}}_{\frac{pst}{(p-s)q}}(Z)} \lesssim \big \|A_{\mu,s} \big \|_{A^p_\alpha\to L^q(\Sn)}.
\]
finishing the proof of the theorem.
\end{proof}
Finally, the last case in Theorem \ref{th-1}.
\begin{theorem}\label{th-04}
Let $\alpha>-1$, $0<s, p, q<\infty$, and let $\mu$ be a positive Borel measure on $\Bn$. If $q<p\leq s$,   then $A_{\mu,s}: A_\alpha^p\rightarrow L^q (\Sn)$ is bounded if and only if
 $$
 H_{\mu}(\zeta):=\sup\limits_{z\in \Gamma(\zeta)}\rho_{\alpha,\mu}(z,r)(1-|z|)^{\frac{(\alpha+1)(p-s)}{p}}
 $$
belongs to $L^{\frac{pq}{s(p-q)}}(\Sn).$ Moreover, we have
\[
\big \|A_{\mu,s} \big \|_{A^p_{\alpha} \rightarrow L^q(\Sn)} \asymp \big \|H_\mu \big \|^{1/s}_{L^{\frac{pq}{s(p-q)}}(\Sn)}.
\]
\end{theorem}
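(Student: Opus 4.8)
The plan is to split the proof into the sufficiency and the necessity and, in both directions, to reduce matters to a single inequality relating the tent sequence spaces $T^p_p(Z)$ and $T^{q/s}_1(Z)$ for an $r$-lattice $Z=\{a_k\}$. Throughout I write $a=\frac{pq}{s(p-q)}$ and, for the discretized density, $w_k=\rho_{\alpha,\mu}(a_k,2r)\,(1-|a_k|)^{\frac{(1+\alpha)(p-s)}{p}}$. By Lemma~\ref{discrete-2}, the membership $H_\mu\in L^{a}(\Sn)$ is equivalent to $\{w_k\}\in T^a_\infty(Z)$, with comparable norms, and both the hypothesis and the conclusion of the theorem will be read off from this reformulation.

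For the sufficiency I would start from \eqref{est-3}, discretize the integral over $\widetilde{\Gamma}(\zeta)$ against the lattice, and use that $\rho_{\alpha,\mu}(\cdot,r)$ and $1-|\cdot|$ are comparable to their values at $a_k$ on $D_k$, together with the subharmonicity estimate $\sup_{D_k}|f|^p\lesssim (1-|a_k|)^{-(n+1+\alpha)}\int_{\widetilde{D}_k}|f|^p\,dV_\alpha$. Setting $\lambda_k=(\sup_{D_k}|f|)\,(1-|a_k|)^{\frac{1+\alpha}{p}}$, one checks that $\lambda\in T^p_p(Z)$ with $\|\lambda\|_{T^p_p(Z)}\lesssim\|f\|_{A^p_\alpha}$, and the powers of $1-|a_k|$ collapse exactly so that $|A_{\mu,s}f(\zeta)|^s\lesssim \sum_{a_k\in\widetilde{\Gamma}(\zeta)}|\lambda_k|^s w_k$. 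Writing $\Lambda_k=|\lambda_k|^s$, the bound $\sum_{a_k\in\widetilde{\Gamma}(\zeta)}\Lambda_k w_k\le\big(\sup_{a_k\in\widetilde{\Gamma}(\zeta)}w_k\big)\sum_{a_k\in\widetilde{\Gamma}(\zeta)}\Lambda_k$ followed by H\"older's inequality on $\Sn$ with exponents $\frac{p}{p-q}$ and $\frac{p}{q}$ isolates $\|H_\mu\|^{q/s}_{L^a(\Sn)}$ in one factor. The remaining factor is $\big(\int_{\Sn}(\sum_{a_k\in\widetilde{\Gamma}}\Lambda_k)^{p/s}\,d\sigma\big)^{q/p}$, which by the subadditivity of $t\mapsto t^{p/s}$ (here $p/s\le 1$) is dominated by $\|\lambda\|^q_{T^p_p(Z)}\lesssim\|f\|^q_{A^p_\alpha}$. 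This yields $\|A_{\mu,s}f\|_{L^q(\Sn)}\lesssim \|H_\mu\|^{1/s}_{L^a(\Sn)}\|f\|_{A^p_\alpha}$.

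For the necessity I would follow the scheme of Theorem~\ref{th-03}. Feeding the randomized atomic test functions \eqref{textf-1} with $\lambda\in T^p_p(Z)$ into the boundedness of $A_{\mu,s}$, integrating in $t$, and applying Kahane's and Khinchine's inequalities exactly as there, produces the analog of \eqref{star}, namely $\int_{\Sn}\big(\sum_{a_j\in\Gamma(\zeta)}|\lambda_j|^s w_j\big)^{q/s}\,d\sigma\lesssim \|A_{\mu,s}\|^q_{A^p_\alpha\to L^q(\Sn)}\,\|\lambda\|^q_{T^p_p(Z)}$. It remains to deduce $\{w_j\}\in T^a_\infty(Z)$ from this family of inequalities. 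I would fix a large $t>1$ and prove the equivalent statement $\{w_j^{1/t}\}\in T^{at}_\infty(Z)$, using the duality $T^{at}_\infty(Z)=\big(T^{(at)'}_{q_0}(Z)\big)^{\ast}$ of Theorem~\ref{TTD1} with inner exponent $q_0$ determined by $\frac1{q_0}=1+\frac{s-p}{pt}\le 1$. Given $\eta\in T^{(at)'}_{q_0}(Z)$ I would factor it through Theorem~\ref{TTD2} as $\eta=\lambda^{s/t}\cdot\tau$ with $\lambda\in T^p_p(Z)$ and $\tau\in T^{(qt/s)'}_{t'}(Z)$; computing reciprocals, $\frac1{(at)'}=\frac{s}{pt}+\frac1{(qt/s)'}$ and $\frac1{q_0}=\frac{s}{pt}+\frac1{t'}$ show these are the correct indices. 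The key inequality then gives $\{w_j|\lambda_j|^s\}\in T^{q/s}_1(Z)$, hence $\{w_j^{1/t}\lambda_j^{s/t}\}=\{(w_j|\lambda_j|^s)^{1/t}\}\in T^{qt/s}_t(Z)$ with the right norm, and pairing against $\tau$ via $\big(T^{qt/s}_t(Z)\big)^{\ast}=T^{(qt/s)'}_{t'}(Z)$ and H\"older yields $\big|\sum_j w_j^{1/t}\eta_j(1-|a_j|)^n\big|\lesssim \|A_{\mu,s}\|^{s/t}\,\|\eta\|_{T^{(at)'}_{q_0}(Z)}$, whence $\{w_j^{1/t}\}\in T^{at}_\infty(Z)$ and $\|H_\mu\|^{1/s}_{L^a(\Sn)}\lesssim \|A_{\mu,s}\|$.

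The main obstacle is exactly this last deduction. Unlike the case $p>\max\{s,q\}$ treated in Theorem~\ref{th-03}, here the inner exponent degenerates: the inner $\ell^{p/(p-s)}$ of that proof becomes a supremum, i.e.\ a $T^{\cdot}_\infty$ condition, so the factorization must be arranged with one inner index equal to $\infty$ and the companion duality with inner index $\le 1$. The sole purpose of the auxiliary power $1/t$ is to create slack: for $t$ large one has $at>1$, $1<qt/s,\,t<\infty$, and all the strict inequalities $(at)'<\tfrac{pt}{s},\ (at)'<(qt/s)',\ q_0<\tfrac{pt}{s},\ q_0<t'$ demanded by Theorem~\ref{TTD2} and the duality of Theorem~\ref{TTD1}. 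Verifying this index bookkeeping, rather than any single estimate, is where the real work lies; once it is in place the argument closes exactly as above and gives $\|A_{\mu,s}\|_{A^p_\alpha\to L^q(\Sn)}\asymp\|H_\mu\|^{1/s}_{L^{pq/s(p-q)}(\Sn)}$.
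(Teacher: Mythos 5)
Your proposal is correct and follows essentially the same route as the paper: sufficiency by splitting off $\sup_{z\in\Gamma(\zeta)}\rho_{\alpha,\mu}(z,r)(1-|z|)^{(1+\alpha)(p-s)/p}$ and applying H\"older with exponents $\frac{p}{p-q},\frac{p}{q}$ together with $p/s\le 1$; necessity via the randomized test functions, Kahane--Khinchine to reach the analogue of \eqref{star}, and then the duality $(T^{P}_{\varrho})^{*}=T^{P'}_{\infty}$ for $\varrho\le 1$ combined with the factorization through $T^p_p(Z)$ after raising to a large auxiliary power $1/t$. The only differences are cosmetic (you discretize the sufficiency step and parameterize the tent spaces by $w_k$ rather than $\nu_k=w_k^{q/s}$), and your index bookkeeping checks out.
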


\begin{proof}
 Suppose $A_{\mu,s}: A_\alpha^p\rightarrow L^q (\Sn)$ is bounded. By Lemma \ref{discrete-2}, we only need to show
\begin{equation}\label{est02}
K_{\mu}:=\int_{\Sn}\sup_{a_k\in \Gamma(\zeta)}\rho_{\alpha,\mu}(a_k,2r)^{\frac{pq}{s(p-q)}}(1-|a_k|)^{\frac{(1+\alpha)(p-s)q}{s(p-q)}}d\sigma(\zeta)<\infty,
\end{equation}
and in this case, we will have $\|H_{\mu}\|^{\frac{pq}{s(p-q)}}_{L^{\frac{pq}{s(p-q)}}(\Sn)}\lesssim K_{\mu}$.
Write $\nu_k=\rho_{\alpha,\mu}(a_k,2r)^{\frac{q}{s}}
(1-|a_k|)^{\frac{(1+\alpha)(p-s)q}{ps}}$. It is obvious that  \eqref{est02} holds if and only if $\nu=\{\nu_k\}\in T^{\frac{p}{p-q}}_{\infty}(Z)$.   For $t>1$, this is equivalent to the statement  $\nu^{1/t}:=\{\nu_k^{1/t}\}\in T^{\frac{pt}{p-q}}_{\infty}(Z)$ with
\[
K_{\mu}\asymp \big \|\nu \big \|^{\frac{p}{p-q}}_{T^{\frac{p}{p-q}}_{\infty}(Z)} \asymp \Big \| \nu^{1/t} \Big \|^{\frac{pt}{p-q}} _{T^{\frac{pt}{p-q}}_{\infty}(Z)}.
\]
For $t$ large enough, by Theorem \ref{TTD1} and Theorem \ref{TTD2}, we have
$$
T^{\frac{pt}{p-q}}_{\infty}(Z)=
\left(T^{(\frac{pt}{p-q})^{'}}_{\varrho}
(Z)\right)^{*}=\left(T^{t'}_{\frac{st}{st-q}}(Z)\cdot T^{\frac{pt}{q}}_{\frac{pt}{q}}(Z)\right)^{*},
$$
since
$$
\frac{st-q}{st}+\frac{q}{pt}=\frac{1}{\varrho}$$ for some $0<\varrho\leq 1$ (because $p\le s$). Take any  $\eta=\{\eta_k\}\in T^{(\frac{pt}{p-q})^{'}}_{\varrho}(Z)$, and factor it as
$
\eta_k=\tau_k\lambda_k^{q/t},$ with  $\tau=\{\tau_k\}\in T^{t'}_{\frac{st}{st-q}}(Z)$ and   $\lambda=\{\lambda_k\}\in T^{p}_{p}(Z).
$
As in the proof of Theorem \ref{th-03}, and because we know that
\eqref{star} also holds for $q<p\le s$, we have
\[
\begin{split}
\sum_{a_k\in \Gamma(\zeta)}|\eta_k| \,|\nu_k^{1/t}|\,(1-|a_k|)^{n}
&\lesssim \|\tau\|_{T^{t'}_{\frac{st}{st-q}}(Z)}\left[\int_{\Sn}\left(\sum_{a_k\in \Gamma(\zeta)}\lambda_k^{s}\nu_k^{s/q}\right)^{q/s}d\sigma(\zeta)\right]^{1/t}
\\
&\lesssim \|\tau\|_{T^{t'}_{\frac{st}{st-q}}(Z)} \cdot  \big \|A_{\mu,s} \big \|_{A^p_\alpha\to L^q(\Sn)}^{q/t}\cdot \|\lambda\|^{q/t}_{ T^p_p(Z)}
\\
&\lesssim \|\eta\|_{T^{(\frac{pt}{p-q})^{'}}_{\varrho}(Z)} \cdot  \big \|A_{\mu,s} \big \|_{A^p_\alpha\to L^q(\Sn)}^{q/t}.
   \end{split}
\]
By duality we get $\nu^{1/t}\in T^{\frac{pt}{p-q}}_{\infty}(Z)$ with $\big \| \nu^{1/t} \big \|_{T^{\frac{pt}{p-q}}_{\infty}(Z)}\lesssim \big \|A_{\mu,s} \big \|_{A^p_\alpha\to L^q(\Sn)}^{q/t}$. Hence
\[
\big \|H_\mu \big \|^{1/s}_{L^{\frac{pq}{s(p-q)}}(\Sn)} \lesssim K_{\mu} ^{\frac{p-q}{pq}} \asymp \Big \| \nu^{1/t} \Big \|^{t/q} _{T^{\frac{pt}{p-q}}_{\infty}(Z)}.\lesssim \big \|A_{\mu,s} \big \|_{A^p_\alpha\to L^q(\Sn)}.
\]

Conversely, for $f\in A^p_\alpha$,
 H\"{o}lder's inequality shows
\begin{eqnarray*}
&& \int_{\Sn} |A_{\mu,s}f(\zeta)|^qd\sigma(\zeta)\lesssim \int_{\Sn} \left(\int_{\widetilde{\Gamma}(\zeta)}|f(z)|^s\rho_{\alpha}(\mu)(z,r)\frac{d V_\alpha(z)}{(1-|z|)^n}\right)^{\frac{q}{s}}d\sigma(\zeta)
\\
&&\verb#       #
\lesssim\int_{\Sn}\left(\sup_{z\in \widetilde{\Gamma}(\zeta)}\rho_{\alpha}(\mu)(z,r)(1-|z|)^{\frac{(1+\alpha)(p-s)}{p}}\right)^{\frac{q}{s}}
\\
&&\verb#         #\times\left(\int_{\widetilde{\Gamma}(\zeta)}|f(z)|^s(1-|z|)^{\frac{(1+\alpha)(s-p)}{p}-n}d V_\alpha(z)\right)^{\frac{q}{s}}d\sigma(\zeta)
\\
&&\verb#       #
\lesssim \left[\int_{\Sn}\left(\sup_{z\in \widetilde{\Gamma}(\zeta)}\rho_{\alpha}(\mu)(z,r)(1-|z|)^{\frac{(1+\alpha)(p-s)}{p}}\right)^{\frac{pq}{s(p-q)}}d\sigma(\zeta)\right]^{\frac{p-q}{p}}
\\
&&\verb#         #\times\left[\int_{\Sn}\left(\int_{\widetilde{\Gamma}(\zeta)}|f(z)|^s(1-|z|)^{\frac{(1+\alpha)(s-p)}{p}-n}d V_\alpha(z)\right)^{\frac{p}{s}}d\sigma(\zeta)\right]^{\frac{q}{p}}.
   \end{eqnarray*}
   On the other hand, we have
\[
\begin{split}
\int_{\widetilde{\Gamma}(\zeta)} |f(z)|^s(1-|z|)^{\frac{(1+\alpha)(s-p)}{p}-n} &d V_\alpha(z)\leq \sum_{a_k\in \widetilde{\Gamma}(\zeta)}\int_{D_k}|f(z)|^s(1-|z|)^{\frac{(1+\alpha)(s-p)}{p}-n}  d V_\alpha(z)
\\
&\leq\sum_{a_k\in \widetilde{\Gamma}(\zeta)}(1-|a_k|)^{\frac{(1+\alpha)s}{p}}\left(\sup_{z\in D_k}|f(z)|\right)^s
\\
&\lesssim \sum_{a_k\in \widetilde{\Gamma}(\zeta)}(1-|a_k|)^{-\frac{ns}{p}} \left(\int_{\widetilde{D}_k}|f(z)|^pd V_\alpha(z)\right)^{\frac{s}{p}}.
\end{split}
\]
Since $\frac{p}{s}\leq 1$,  we obtain
\[
\begin{split}
\left(\int_{\widetilde{\Gamma}(\zeta)}\!\!|f(z)|^s(1-|z|)^{\frac{(1+\alpha)(s-p)}{p}-n} d V_\alpha(z)\right)^{\frac{p}{s}} &
\lesssim\sum_{a_k\in \widetilde{\Gamma}(\zeta)}(1-|a_k|)^{-n} \int_{\widetilde{D}_k}|f(z)|^pd V_\alpha(z)
\\
&\lesssim \int_{\widetilde{\widetilde{\Gamma}}(\zeta)}(1-|z|)^{-n} |f(z)|^pd V_\alpha(z).
   \end{split}
\]

Recall that different apertures define the same tent spaces with equivalent quasinorms. The assumption that $H_\mu\in L^{\frac{pq}{s(p-q)}}(\Sn)$ tells that
\begin{eqnarray*}
 \int_{\Sn} |A_{\mu,s}f(\zeta)|^qd\sigma(\zeta)\lesssim \big \|H_\mu \big \|^{q/s}_{L^{\frac{pq}{s(p-q)}}(\Sn)}\left[\int_{\Sn} \int_{\Gamma(\zeta)}(1-|z|)^{-n} |f(z)|^pd V_\alpha(z)d\sigma(\zeta)\right]^{\frac{q}{p}}.
   \end{eqnarray*}
Since, by \eqref{EqG},
\begin{eqnarray*}
\int_{\Sn} \int_{\Gamma(\zeta)} |f(z)|^p\frac{d V_\alpha(z)}{(1-|z|)^{n}}d\sigma(\zeta)\asymp\int_{\Bn}|f(z)|^pd V_\alpha(z),
   \end{eqnarray*}
we obtain
\begin{eqnarray*}
 \big \| A_{\mu,s} f \big \|^q_{L^q(\Sn)} \lesssim \|f\|_{A^p_\alpha}^{q}\cdot \big \|H_\mu \big \|^{\frac{q}{s}}_{L^{\frac{pq}{s(p-q)}}(\Sn)}.
   \end{eqnarray*}
That is, $A_{\mu,s}$ is bounded from $A_\alpha^p$ to $L^q (\Sn)$ with $\big \|A_{\mu,s} \big \|_{A^p_{\alpha} \rightarrow L^q(\Sn)} \lesssim \big \|H_\mu \big \|^{1/s}_{L^{\frac{pq}{s(p-q)}}(\Sn)}$, which completes the proof.
\end{proof}


\end{document}